\documentclass[11pt,a4paper]{article}
\usepackage{graphicx}
\usepackage[caption=false]{subfig}
\usepackage{latexsym,amsfonts,amsbsy,amssymb}
\usepackage{amsmath,amsthm,amsfonts}
\usepackage{cite}
\usepackage{color}
\usepackage{dsfont}
\usepackage{amssymb}
\usepackage[noblocks]{authblk}

\textwidth=15cm \textheight=22cm \topmargin 0 cm \oddsidemargin 0in
 \evensidemargin 0in \baselineskip= 12pt
\parindent=12pt
\parskip=3pt
\overfullrule=0pt
\makeatletter

\@addtoreset{equation}{section} \makeatother
\newtheorem{theorem}{Theorem}[section]
\newtheorem{lemma}{Lemma}[section]

\newtheorem{remark}{Remark}[section]
\newtheorem{example}{Example}[section]

\setlength{\parindent}{0.9cm}
\setlength{\parskip}{3ptplus1ptminus2pt}
\setlength{\baselineskip}{12pt plus2pt minus1pt}
\setlength{\topmargin}{2.5 cm} \setlength{\headheight}{0cm}
\setlength{\textheight}{26.5 cm} \setlength{\textwidth}{15 cm}
\makeatletter \@addtoreset{equation}{section} \makeatother
\textheight=24cm \textwidth=16cm
\parskip = 0.5cm

\topmargin=1cm \oddsidemargin=0cm \evensidemargin=0cm
\textwidth=15cm \textheight=22cm \topmargin 0 cm \oddsidemargin 0in
 \evensidemargin 0in \baselineskip= 12pt
\parindent=12pt
\parskip=3pt
\overfullrule=0pt
\linespread{1.12}
\allowdisplaybreaks % For long formula

\begin{document}
\title{\bf Anisotropic meshes and stabilized parameters for the stabilized finite element methods}
\date{}
\author{Yana Di\thanks{LSEC, NCMIS, Academy of Mathematics and Systems Science,
Chinese Academy of Sciences, Beijing 100190, China
({\tt yndi@lsec.cc.ac.cn}). The author is
supported in part by National Natural Science Foundation
of China (11271358)
and the National Center for Mathematics and Interdisciplinary Science,
CAS.
} \quad
Hehu Xie\thanks{LSEC, NCMIS, Academy of Mathematics and Systems Science,
Chinese Academy of Sciences, Beijing 100190, China
({\tt hhxie@lsec.cc.ac.cn}). The author is
supported in part by National Natural Science Foundation
of China (91330202, 11371026, 11001259,  2011CB309703)
and the National Center for Mathematics and Interdisciplinary Science,
CAS.
} \quad Xiaobo
Yin\thanks{Corresponding author, School of Mathematics and Statistics \& Hubei Key Laboratory of Mathematical Sciences, Central China Normal University, Wuhan 430079, China, ({\tt yinxb@mail.ccnu.edu.cn}).  The author is supported in part by the National
Natural Science Foundation of China (11201167).}}
\maketitle

\begin{abstract}
We propose a numerical strategy to generate the anisotropic meshes and select the appropriate stabilized parameters simultaneously for two dimensional convection-dominated convection-diffusion equations by stabilized continuous linear finite elements. Since the discretized error in a suitable norm can be bounded by the sum of interpolation error and its variants in different norms, we replace them by some terms which contain the Hessian matrix of the true solution, convective fields, and the geometric properties such as directed edges and the area of the triangle. Based on this observation, the shape, size and equidistribution requirements are used to derive the corresponding metric tensor and the stabilized parameters. It is easily found from our derivation that the optimal stabilized parameter is coupled with the optimal metric tensor on each element. Some numerical results are also provided to validate the stability and efficiency of the proposed numerical strategy.
\end{abstract}

{\bf Key words.} { Metric tensor, anisotropic, convection-diffusion equation, stabilized parameter, stabilized finite element methods}

{\bf AMS subject classifications.} 65N30, 65N50

\section{Introduction}
This paper is concerned with the numerical solution of the following scalar convection-diffusion equation
\begin{equation}\label{adv-diff-problem}
\left\{
\begin{array}{rcl}
-\varepsilon\Delta u+\mathbf b\cdot\nabla u&=&f,\ \ \ \  \ {\rm in}\ \Omega,\\
u&=&g,\ \ \ {\rm on}\ \partial \Omega,
\end{array}
\right.
\end{equation}
where $\Omega\subset \mathbb{R}^2$ is a bounded polygonal domain with boundary $\partial \Omega$, $\varepsilon>0$ is the constant diffusivity, ${\bf b}\in [W^{1,\infty}(\Omega)]^2$ is the given convective field satisfying the incompressibility condition
$\nabla \cdot {\bf b}=0$ in $\Omega$, $f\in L^2(\Omega)$ is the source function, and $g\in H^{1/2}(\partial \Omega)$ represents the
Dirichlet boundary condition.

Despite the apparent simplicity of problem (\ref{adv-diff-problem}), its
numerical solution become particularly challenging when convection dominates diffusion (i.e., when $\varepsilon\ll \|{\bf b}\|$). In such cases, the solution usually
exhibits very thin layers across which the derivatives of the solution
are large. The
widths of these layers are usually significantly smaller than
the mesh size and hence the layers can be hardly resolved.
 As a result of this, on
meshes which do not resolve the layers, standard Galerkin finite
element methods have poor stability and accuracy properties.

To enhance the stability and accuracy of the Galerkin
discretization in convection-dominated regime,
various stabilized strategies have been developed. Examples include
upwind scheme \cite{heinrich1977upwind}, streamline diffusion finite element methods (SDFEM), also known
as streamline-upwind Petrov-Galerkin formulation (SUPG) \cite{brooks1982streamline,hughes1987recent}, the Galerkin/least squares methods (GLS) \cite{hughes1989new}, residual-free-bubbles
(RFB) methods \cite{brezzi1998further,brezzi1999priori,cangiani2007residual,franca2007inf}, local projection schemes \cite{becker2001finite,braack2006local,knobloch2011stability,matthies2007unified,matthies2015local},   exponential fitting \cite{bank1990some,brezzi1989two,xu1999monotone}, discontinuous Galerkin methods \cite{baumann1999discontinuous,houston2002discontinuous}, subgrid-scale
techniques \cite{guermond1999stabilization,guermond2006subgrid},
continuous interior penalty methods \cite{braack2007stabilized,burman2005unified,burman2007continuous,dolejsi2013framework,el2007priori,ern2013weighting},
and spurious oscillations at layers diminishing (SOLD) methods (also known as shock capturing methods) \cite{john2007spurious,john2008spurious,knopp2002stabilized}, interested readers are referred to \cite{roos2008robust} for an extensive survey of the literature.

However, if uniform meshes are used for stabilized finite element methods, oscillations still exist near the layers in some cases although very fine meshes are used \cite{john2006computational}. Hence, it is more appropriate to generate adaptively anisotropic meshes to capture the layers. There are some recent efforts directed at constructing adaptive anisotropic
meshes which combine a stabilized scheme and some mesh
modification strategies. For example, the resolution of boundary
layers occurring in the singularly perturbed case is achieved using anisotropic
mesh refinement in boundary layer regions \cite{apel1996anisotropic}, where the actual choice of the element
diameters in the refinement zone and the determination of the numerical damping
parameters is addressed. In \cite{nguyen2009adaptive} an adaptive meshing algorithm
is designed by combining SUPG method, an adapted metric tensor and an anisotropic centroidal Voronoi tessellation algorithm, which is shown to be robust in detecting layers and efficient in avoiding non-physical oscillations
in the numerical approximation. Sun et al. \cite{sun2010numerical} develop a multilevel-homotopic-adaptive finite element method (MHAFEM)
by combining SDFEM, anisotropic mesh adaptation, and
the homotopy of the diffusion coefficient. The authors use numerical experiments to demonstrate
that MHAFEM can efficiently capture boundary or interior layers and produce accurate solutions.

This list is by no means exhausted, and there are many efforts to construct adaptive anisotropic
meshes by combining a stabilized scheme and some mesh
modification strategies. However, so far as we know, there are still two key problems which haven't been solved in rigorous ways.

First, although there are many results on optimal anisotropic meshes for minimizing the interpolation error and also the discretized error of finite element methods for solving the Laplace equation due to c\'{e}a's lemma, its extension to the discretized error of stabilized finite element methods for the convection-dominated convection-diffusion equation is not clear. So far as we know most results on optimal anisotropic meshes for the stabilized finite element methods applied to the convection-dominated convection-diffusion equation is just the same with that for the interpolation error. In fact, this strategy is not always optimal, which will be illustrated experimentally later in this paper.

Second, there is a crucial factor for most SFEMs, that is, the proper selection of the stabilization parameter $\alpha_K$ on the element $K$ (there are some attempts avoiding explicitly mesh-dependent stabilization parameter, e.g. \cite{bochev2015formulation,bochev2013parameter,franca1993element}). For example, the standard choice for quasi-uniform triangulations for SUPG is (\cite[P. 305-306]{roos2008robust})
\begin{equation*}
     \alpha_K=\left \{
     \begin{array}{lll}
     \alpha_0 h_K & Pe_K>1,& (\mbox{convection-dominated case})\\
     \alpha_1 h_K^2/\varepsilon & Pe_K\leq 1,& (\mbox{diffusion-dominated case})
     \end{array}
     \right .
\end{equation*}
with appropriate positive constants $\alpha_0$ and $\alpha_1$. Here $Pe_K:=\|{\bf b}\|_{0,\infty,K}h_K/(2\varepsilon)$ is the mesh Pecl\'{e}t
number and $h_K=\sup_{{\bf x},{\bf x}'\in K}\|{\bf x}-{\bf x}'\|$ is the diameter of the element $K$. A more sophisticated
choice is to replace the diameter $h_K$ of the element in the above definition for $\alpha_K$ by its
streamline diameter $h_{{\bf b},K}$ which is the maximal length of any characteristic running through $K$. How to extend the strategy for quasi-uniform triangulations to the case of anisotropic meshes? There are some attempts which basically use the analog of isotopic case to get the following form of stabilization parameters
\begin{equation}\label{stab_para_2d_old}
\alpha_K=\frac{h_K}{2\|{\bf b}\|_K}\min{\Big\{}1,\frac{Pe_K}{3}{\Big\}},\:
\mbox{with}\: Pe_K=\frac{\|{\bf b}\|_Kh_K}{2\varepsilon}.
\end{equation}
For example, Nguyen et al. \cite{nguyen2009adaptive} use stabilization parameters as the form of (\ref{stab_para_2d_old}) by setting $h_K$ as the length of the longest edge of the element $K$ projected onto the convective field $\bf b$, this choice of $h_K$ together with (\ref{stab_para_2d_old}) is denoted by ``LEP" in this paper. Another similar choice of $h_K$ is the length of the projection of the longest edge of the element $K$ onto the convective field $\bf b$, which together with (\ref{stab_para_2d_old}) is denoted by ``PLE". As pointed in isotropic case, a more sophisticated
choice is to choose $h_K$ as the maximal length of any characteristic running through $K$, i.e., the diameter of $K$ in the direction of the convection ${\bf b}$, which together with (\ref{stab_para_2d_old}) is denoted by ``DDC".
There are also some other choices of stabilized parameters derived by relatively rigorous theory on anisotropic meshes. For example, in \cite{apel1996anisotropic} an anisotropic a priori error analysis is provided for the advection-diffusion-reaction problem. It is
shown that the diameter of each element $K$ (together with (\ref{stab_para_2d_old}), it is denoted by ``DEE"), should
be used as $h_K$ in (\ref{stab_para_2d_old}) for the design of the stability parameters in the case of external boundary layers. In \cite{kunert2001robust} an alternative approach is proposed showing
that the diameter of each element is again the correct choice. Micheletti et al. \cite{micheletti2003stabilized} consider the GLS methods for the scalar advection-diffusion and the Stokes problems with
approximations based on continuous piecewise linear finite elements on anisotropic
meshes, where new definitions of the stability parameters are proposed.
Cangiani and S\"{u}li \cite{cangiani2007residual} use the stabilizing term derived from the
RFB method to redefine the mesh P\'{e}clet number and propose a new choice of the
streamline-diffusion parameters (This well-known fact that the RFB method and the SDFEM are equivalent under certain conditions was first observed by Brezzi and Russo \cite{brezzi1994choosing}. The similar idea was also used in \cite{linss2005anisotropic}) that is suitable for use on anisotropic partitions. Although there are so many strategies on selection of the stabilization parameters, it is still hard to show which is optimal. Besides, there is little result on the relationship between the strategy to generate the anisotropic meshes and the selection of stabilized parameters.

In this paper, we propose a strategy to generate the anisotropic meshes and select the appropriate stabilized parameters simultaneously of stabilized continuous linear finite elements for two dimensional convection-dominated problems. As in \cite{franca1992stabilized,hughes1989new,micheletti2003stabilized}, the discretized error (the difference between the true solution and the stabilized finite element solution) in a suitable norm can be bounded by the sum of interpolation error and its variants in different norms. Based on this result, we use the idea in our recent work \cite{xie2014metric} to replace these norms of interpolation error by some terms which contain the Hessian matrix of the true solution, convective fields ${\bf b}$, and the geometric properties such as directed edges and the area of the triangle. After that, we use the shape, size and equidistribution
requirements to derive the correspond metric tensor and the stabilized parameters. From our derivation it is easily found that the optimal stabilized parameter is coupled with the optimal metric tensor on each element. Specifically, the relationship between the optimal metric tensor and the optimal stabilized parameter on each element is given approximately by (\ref{relation_C_K_Alpha}).

The rest of the paper is organized as follows. In Section \ref{Section:Stab_FEM}, we state the GLS stabilized finite element method for the problem (\ref{adv-diff-problem}).
Section \ref{Section:Basic_Estimate} is devoted to obtaining the estimate for the discretized error in a suitable norm via the anisotropic framework similar to that used in \cite{xie2014metric}. The optimal choice of the metric tensor and the stabilized parameters for the stabilized linear finite element methods are then derived in Section \ref{Section_Metric_Tensor}. Some numerical examples are provided in Section \ref{Section_Numerical_Examples} to demonstrate the stability and efficiency of the proposed numerical strategy. Some concluding remarks will be given in the last section.

\section{Stabilized finite element discretization}\label{Section:Stab_FEM}
We shall use the standard notations in  for the
Sobolev spaces $H^s(\Omega)$ and their associated inner products
$(\cdot,\cdot)_s$, norms $||\cdot||_s$, and seminorms $|\cdot|_s$
for $s \geq 0$. The Sobolev space $H^0(\Omega)$ coincides with
$L^2(\Omega)$, in which case the norm and inner product are denoted
by $||\cdot||$ and $(\cdot,\cdot)$, respectively. Let $H^1_g(\Omega) = \{v \in H^1(\Omega), v|_{\partial\Omega}=g\}$ and $H^1_0(\Omega) = \{v \in H^1(\Omega), v|_{\partial\Omega}=0\}$. The variational formulation of (\ref{adv-diff-problem}) reads as follows: find $u\in H^1_g(\Omega)$
which satisfies
\begin{equation}\label{adv-diff-weak-problem}
A(u,v)=F(v),\quad \forall v\in H^1_0(\Omega),
\end{equation}
where $A(\cdot,\cdot)$ and $F(\cdot)$ define the bilinear and linear forms
\begin{equation*}
A(u,v)=(\varepsilon \nabla u,\nabla v)+({\bf b}\cdot \nabla u,v),
\end{equation*}
and
\begin{equation*}
F(v)=(f,v),
\end{equation*}
respectively.

Given a triangulation $\mathcal{T}_h$ of $\Omega$, we denote the piecewise linear and continuous finite element space by $V^h$, i.e.,
\begin{equation*}
V^h = \{v \in H^1(\Omega), v|_K\in \mathcal{P}_1(K), \forall K\in \mathcal{T}_h\},
\end{equation*}
where $\mathcal{P}_1(K)$ is linear polynomial space in one element $K$. We then define $V^h_g:= V^h\bigcap H^1_g(\Omega)$ and $V^h_0:= V^h\bigcap H^1_0(\Omega)$. The standard finite element discretization of (\ref{adv-diff-weak-problem}) is to find $u_h\in V^h_g$ such that
\begin{equation}\label{femdiscretization}
A(u_h, v_h) = f(v_h),\quad \forall v_h\in V^h_0.
\end{equation}
For convection-dominated problems ($\varepsilon\ll \|{\bf  b}\|$)
, (\ref{femdiscretization}) using standard grid sizes are not
able to capture steep layers without introducing non-physical oscillations.
To enhance the stability and accuracy in the convection dominated regime,
various stabilization strategies have been developed. Here we take the GLS stabilized finite element method as an example which reads
as follows: find $u_h\in V^h_g$ such that
\begin{equation}\label{adv-diff-stab}
A_h(u_h,v_h)=F(v_h),\quad \forall v_h\in V^h_0,
\end{equation}
with
\begin{equation*}
A_h(u_h,v_h)=A(u_h,v_h)+\sum_{K\in \mathcal{T}_h}\alpha_K(-\varepsilon \Delta u_h+{\bf b}\cdot \nabla u_h,-\varepsilon \Delta v_h+{\bf b}\cdot \nabla v_h)_K,
\end{equation*}
and
\begin{equation*}
F_h(v_h)=F(v_h)+\sum_{K\in \mathcal{T}_h}\alpha_K(f,-\varepsilon \Delta v_h+{\bf b}\cdot \nabla v_h)_K.
\end{equation*}
In this paper we use linear finite element method, so the terms $\Delta u_h|_K$ and $\Delta v_h|_K$ in the two above equations are identically equal to zero. At this time, the GLS approach is the same as the SUPG method, which also enjoys the result in this paper if the linear finite element method is used. We endow the space $H^1_0(\Omega)$ with
the discrete norm $\|\cdot\|_h$ defined, for any $w\in H^1_0(\Omega)$, by
\begin{equation}\label{Energy_Norm}
\|w\|_h^2:=\varepsilon\|\nabla w\|_{L^2(\Omega)}^2
+\sum_{K\in\mathcal{T}_h}\alpha_K\|\mathbf b\cdot\nabla
w\|_{L^2(K)}^2.
\end{equation}

\begin{lemma}\label{Lemma:2derrorbound}
The stabilized finite element approximation $u_h$ defined by (\ref{adv-diff-stab})
has the following error estimate
\begin{align}\label{2derrorbound}
\|u-u_h\|_h^2&\leq C\sum_{K\in\mathcal{T}_h}{\Big (}\alpha_K^{-1}\|u-\Pi_hu\|_{L^2(K)}^2 + \varepsilon\|\nabla (u-\Pi_hu)\|_{L^2(K)}^2\nonumber\\
&+\alpha_K\|{\bf b}\cdot\nabla (u-\Pi_hu)\|_{L^2(K)}^2 + \alpha_K\varepsilon^2\|\Delta(u-\Pi_hu)\|_{L^2(K)}^2{\Big )},
\end{align}
where $\Pi_h$ denotes the standard continuous piecewise linear interpolation operator.
\end{lemma}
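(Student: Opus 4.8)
The plan is to derive the estimate from the standard combination of coercivity (Galerkin orthogonality) and continuity, carefully tracking the contribution of the diffusion term $\varepsilon\Delta$ which, although it vanishes on each element for linear elements $u_h,v_h$, does \emph{not} vanish when applied to the interpolation error $u-\Pi_h u$ (since $u$ itself need not be piecewise linear). First I would record the coercivity of $A_h$ on $V^h_0$ with respect to $\|\cdot\|_h$: because $\nabla\cdot\mathbf b=0$ implies $(\mathbf b\cdot\nabla v_h,v_h)=0$, one gets $A_h(v_h,v_h)=\varepsilon\|\nabla v_h\|_{L^2(\Omega)}^2+\sum_K\alpha_K\|\mathbf b\cdot\nabla v_h\|_{L^2(K)}^2=\|v_h\|_h^2$ (using again that $\Delta v_h|_K=0$ so the GLS term reduces to the streamline term). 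Next I would invoke Galerkin orthogonality for the consistent GLS scheme, $A_h(u-u_h,v_h)=0$ for all $v_h\in V^h_0$, which holds because the strong residual $-\varepsilon\Delta u+\mathbf b\cdot\nabla u-f$ vanishes.

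The core step is the standard splitting: set $e_h:=u_h-\Pi_h u\in V^h_0$ and $\eta:=u-\Pi_h u$, so that $u-u_h=\eta-e_h$. Then
\begin{align*}
\|e_h\|_h^2=A_h(e_h,e_h)=A_h(u_h-u,e_h)+A_h(\eta,e_h)=A_h(\eta,e_h),
\end{align*}
using orthogonality. Now I would expand $A_h(\eta,e_h)$ term by term:
\begin{align*}
A_h(\eta,e_h)=(\varepsilon\nabla\eta,\nabla e_h)+(\mathbf b\cdot\nabla\eta,e_h)+\sum_{K}\alpha_K(-\varepsilon\Delta\eta+\mathbf b\cdot\nabla\eta,\mathbf b\cdot\nabla e_h)_K,
\end{align*}
the last sum because $\Delta e_h|_K=0$. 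For the convective term $(\mathbf b\cdot\nabla\eta,e_h)$ I would integrate by parts on $\Omega$ (or elementwise): since $\nabla\cdot\mathbf b=0$ and $e_h\in H^1_0$, we get $(\mathbf b\cdot\nabla\eta,e_h)=-(\eta,\mathbf b\cdot\nabla e_h)$, which is the move that converts the troublesome $L^2$ norm of $\nabla\eta$ into an $L^2$ norm of $\eta$ itself, paired against $\mathbf b\cdot\nabla e_h$. Then each of the four resulting pairings is bounded by Cauchy–Schwarz, element by element, against a factor of $\|e_h\|_h$: the first gives $\big(\sum_K\varepsilon\|\nabla\eta\|_{L^2(K)}^2\big)^{1/2}\cdot\varepsilon^{1/2}\|\nabla e_h\|$; the second gives $\big(\sum_K\alpha_K^{-1}\|\eta\|_{L^2(K)}^2\big)^{1/2}\cdot\big(\sum_K\alpha_K\|\mathbf b\cdot\nabla e_h\|_{L^2(K)}^2\big)^{1/2}$; the third gives $\big(\sum_K\alpha_K\|\mathbf b\cdot\nabla\eta\|_{L^2(K)}^2\big)^{1/2}\cdot\big(\sum_K\alpha_K\|\mathbf b\cdot\nabla e_h\|_{L^2(K)}^2\big)^{1/2}$; and the fourth gives $\big(\sum_K\alpha_K\varepsilon^2\|\Delta\eta\|_{L^2(K)}^2\big)^{1/2}\cdot\big(\sum_K\alpha_K\|\mathbf b\cdot\nabla e_h\|_{L^2(K)}^2\big)^{1/2}$. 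All the second factors are bounded by $\|e_h\|_h$, so dividing through by $\|e_h\|_h$ yields $\|e_h\|_h\le C\big(\text{RHS of }(\ref{2derrorbound})\big)^{1/2}$. Finally I would apply the triangle inequality $\|u-u_h\|_h\le\|\eta\|_h+\|e_h\|_h$ and bound $\|\eta\|_h^2=\varepsilon\|\nabla\eta\|^2+\sum_K\alpha_K\|\mathbf b\cdot\nabla\eta\|_{L^2(K)}^2$, which is already dominated by the first and third groups of terms on the right-hand side of $(\ref{2derrorbound})$; squaring gives the stated inequality.

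The main obstacle — really the only subtle point — is the handling of the convective term $(\mathbf b\cdot\nabla\eta,e_h)$. A naive Cauchy–Schwarz there would produce $\big(\sum_K\alpha_K\|\mathbf b\cdot\nabla\eta\|^2\big)^{1/2}\big(\sum_K\alpha_K^{-1}\|e_h\|^2\big)^{1/2}$, but $\sum_K\alpha_K^{-1}\|e_h\|_{L^2(K)}^2$ is \emph{not} controlled by $\|e_h\|_h^2$, so this bound is useless. The integration-by-parts trick (legitimate because $\mathbf b$ is divergence-free and $e_h$ has zero trace) is what saves the argument, trading the estimate onto $\|\eta\|_{L^2(K)}$ weighted by $\alpha_K^{-1}$ and onto $\|\mathbf b\cdot\nabla e_h\|_{L^2(K)}$ weighted by $\alpha_K$ — precisely the combination appearing in $\|e_h\|_h$. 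One should also take a moment to note that the GLS stabilization term and the source-term modification in $F_h$ are \emph{consistent} (the added residual terms cancel because $u$ solves the PDE strongly), which is what makes Galerkin orthogonality hold for $A_h$ in the first place; everything else is routine Cauchy–Schwarz bookkeeping, and the generic constant $C$ absorbs the finitely many numerical factors.
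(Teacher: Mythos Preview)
Your argument is correct and is precisely the standard a priori analysis for GLS/SUPG that the paper itself does not reproduce but merely cites (\cite{franca1992stabilized,hughes1989new,micheletti2003stabilized}); the coercivity--orthogonality--splitting structure, the integration by parts on the convective term to trade $(\mathbf b\cdot\nabla\eta,e_h)$ for $-(\eta,\mathbf b\cdot\nabla e_h)$, and the subsequent elementwise Cauchy--Schwarz bookkeeping are exactly what those references do. There is no meaningful difference in approach to comment on.
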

\begin{proof}
See, for example, \cite{franca1992stabilized,hughes1989new,micheletti2003stabilized}.
\end{proof}

\section{Estimates for the interpolation error and its
variants}\label{Section:Basic_Estimate}
As stated in Lemma \ref{Lemma:2derrorbound}, the discretized error in $\|\cdot\|_h$ norm is bounded by four terms of interpolation error and its variants in different norms. In fact the
interpolation error depends on the solution, the size and shape of
the elements in the mesh. Understanding this relation is crucial for
generating efficient meshes. In the mesh generation fields, this
relation is often studied for the model problem of interpolating
quadratic functions. For instance, Nadler \cite{nadler1986piecewise} derived an exact
expression for the $L^2$-norm of the linear interpolation error in
terms of the three sides ${\bf \ell}_1$, ${\bf \ell}_2$, and ${\bf
\ell}_3$ of the triangle $K$:
\begin{equation}\label{Nadlerformula}
\|u-\Pi_hu\|^2_{L^2(K)}=\frac{|K|}{180}{\Big[}{\Big(}d_{11}+d_{22}+d_{33}{\Big)}^2+d_{11}^2+d_{22}^2+d_{33}^2{\Big]},
\end{equation}
where $|K|$ is the area of the triangle, $d_{ij}={\bf \ell}_i\cdot
H(u){\bf \ell}_j$ with $H(u)$ being the Hessian of $u$.

Three element-wise error estimates in different norms are derived by
the following lemmas, which, together with \cite{nadler1986piecewise}, are fundamental for further discussion. Suppose $u$ is a quadratic function on a triangle $K$. The function is given
by its matrix representation:
\begin{equation*}
\forall {\bf x}\in K,\quad u({\bf x})=\frac{1}{2}{\bf x}^tH(u){\bf x}.
\end{equation*}

\begin{lemma}\label{Lemma:2D_H1}
Let $u$ be a quadratic function on a triangle $K$, and $\Pi_hu$ be
the Lagrangian linear interpolation of $u$ on $K$. The following
relationship holds:
\begin{equation}\label{2destimator_H1}
\|\nabla(u-\Pi_hu)\|^2_{L^2(K)}=\frac{1}{48|K|}\sum_{\substack{i,j=1,2\\i\leq
j}}D_{ij}{\bf \ell}_i^t{\bf
\ell}_j,
\end{equation}
where
\begin{equation}\label{D_ij}
D_{11}=d_{12}^2+d_{23}^2, D_{22}=d_{12}^2+d_{13}^2,
D_{12}=2d_{12}^2.
\end{equation}
\end{lemma}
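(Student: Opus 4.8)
The plan is to compute $\nabla(u-\Pi_h u)$ explicitly in barycentric coordinates and integrate. Since neither $u-\Pi_h u$ nor the numbers $d_{ij}$ change when a linear polynomial is added to $u$, we keep the given representation $u({\bf x})=\tfrac12{\bf x}^tH(u){\bf x}$ and write $d_{ij}={\bf \ell}_i^tH(u){\bf \ell}_j$. Denote the vertices of $K$ by $P_1,P_2,P_3$ and fix the cyclic orientation of the sides, ${\bf \ell}_1=P_2-P_1$, ${\bf \ell}_2=P_3-P_2$, ${\bf \ell}_3=P_1-P_3$, so that ${\bf \ell}_1+{\bf \ell}_2+{\bf \ell}_3={\bf 0}$; then ${\bf \ell}_3=-{\bf \ell}_1-{\bf \ell}_2$ and $d_{11}=-d_{12}-d_{13}$, $d_{22}=-d_{12}-d_{23}$, which is exactly what lets the final formula be written through ${\bf \ell}_1,{\bf \ell}_2$ and $d_{12},d_{13},d_{23}$ alone. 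Let $\lambda_1,\lambda_2,\lambda_3$ be the barycentric coordinates on $K$. Because $\{\lambda_i\}\cup\{\lambda_i\lambda_j\}_{i<j}$ is a basis of $\mathcal{P}_2(K)$ and $\Pi_h u=\sum_i u(P_i)\lambda_i$, the error has the form $u-\Pi_h u=\sum_{i<j}b_{ij}\lambda_i\lambda_j$; evaluating at the midpoint of the edge $P_iP_j$ (equivalently, noting that on that edge $u-\Pi_h u$ is the one--dimensional linear interpolation error of a quadratic) gives $b_{ij}=-\tfrac12(P_j-P_i)^tH(u)(P_j-P_i)$, so $b_{12}=-\tfrac12 d_{11}$, $b_{23}=-\tfrac12 d_{22}$, and $b_{13}=-\tfrac12{\bf \ell}_3^tH(u){\bf \ell}_3=-\tfrac12(d_{11}+2d_{12}+d_{22})$.

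Next I would record the two elementary facts that turn the remainder into linear algebra on $K$. First, the gradients $\nabla\lambda_i$ are constant on $K$ and $\nabla\lambda_i\cdot\nabla\lambda_j=\tfrac{1}{4|K|^2}\sigma_i^t\sigma_j$ for all $i,j$, where $\sigma_i$ is the side opposite $P_i$, i.e. $(\sigma_1,\sigma_2,\sigma_3)=({\bf \ell}_2,{\bf \ell}_3,{\bf \ell}_1)$: for $i=j$ this is just $|\nabla\lambda_i|=|\sigma_i|/(2|K|)$, and the off--diagonal case follows from $\sum_i\nabla\lambda_i={\bf 0}$ together with $|\sigma_k|^2=|\sigma_i+\sigma_j|^2$. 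Second, the barycentric mass matrix is $\int_K\lambda_i\lambda_j\,dx=\tfrac{|K|}{12}(1+\delta_{ij})$. Differentiating the expansion of the error gives $\nabla(u-\Pi_h u)=\sum_{i,j}c_{ij}\lambda_i\nabla\lambda_j=\sum_j g_j\nabla\lambda_j$, where $C=(c_{ij})$ is the symmetric zero--diagonal matrix whose off--diagonal entries are the $b_{ij}$ above and $g_j:=\sum_i c_{ij}\lambda_i$ is a linear function.

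Combining the two facts,
\begin{equation*}
\|\nabla(u-\Pi_h u)\|_{L^2(K)}^2=\frac{1}{4|K|^2}\sum_{j,k}\Big(\int_K g_jg_k\,dx\Big)\sigma_j^t\sigma_k=\frac{1}{48|K|}\sum_{j,k}\big(s_js_k+(C^2)_{jk}\big)\sigma_j^t\sigma_k,
\end{equation*}
where $s_j$ is the $j$-th column sum of $C$ and we used $\int_K g_jg_k\,dx=\tfrac{|K|}{12}\big(s_js_k+(C^2)_{jk}\big)$. The right-hand side is a linear combination of ${\bf \ell}_1^t{\bf \ell}_1$, ${\bf \ell}_1^t{\bf \ell}_2$, ${\bf \ell}_2^t{\bf \ell}_2$ with coefficients that are polynomials in $d_{11},d_{12},d_{22}$, once one inserts $\sigma_1={\bf \ell}_2$, $\sigma_2=-{\bf \ell}_1-{\bf \ell}_2$, $\sigma_3={\bf \ell}_1$; collecting them, one checks that the coefficient of ${\bf \ell}_1^t{\bf \ell}_1$ collapses to $d_{12}^2+d_{23}^2=D_{11}$, that of ${\bf \ell}_2^t{\bf \ell}_2$ to $d_{12}^2+d_{13}^2=D_{22}$ (by the symmetry exchanging $P_1$ and $P_3$), and that of ${\bf \ell}_1^t{\bf \ell}_2$ to $2d_{12}^2=D_{12}$, which is precisely \eqref{2destimator_H1}. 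I expect this last collection of terms to be the only real obstacle: the intermediate expressions are bulky polynomials in $d_{11},d_{12},d_{22}$, and they only collapse to the compact $D_{ij}$ if the three side orientations are handled consistently --- the cyclic choice ${\bf \ell}_1+{\bf \ell}_2+{\bf \ell}_3={\bf 0}$ is essential, and the opposite sign convention for one side would flip the sign of the $D_{12}$ term. Alternatively one can pull everything back by the affine map from the unit triangle whose Jacobian has ${\bf \ell}_1,{\bf \ell}_2$ as columns, which reduces the calculation to a single fixed $2\times2$ matrix identity at the price of rewriting $H(u)$ in the reference frame; the argument is otherwise unchanged.
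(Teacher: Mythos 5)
Your proof is correct and takes a genuinely different route from the paper's. The paper proves Lemma~\ref{Lemma:2D_H1} by deferring to the argument of Lemma~\ref{Lemma:2D_Conv}: pull back to the reference right triangle via the affine map $B_K=[{\bf \ell}_1,-{\bf \ell}_2]$, write the error explicitly as $e=\tfrac12[d_{11}(\hat x^2-\hat x)+d_{22}(\hat y^2-\hat y)-2d_{12}\hat x\hat y]$, and integrate monomials over $K_r$; the matrix $(B_K^tB_K)^{-1}$ is what produces the coefficients ${\bf \ell}_i^t{\bf \ell}_j$. You instead work intrinsically in barycentric coordinates, expanding the error in the edge-bubble basis $\{\lambda_i\lambda_j\}_{i<j}$ with coefficients read off from the one-dimensional midpoint error, and then reduce everything to the mass matrix $\int_K\lambda_i\lambda_j=\tfrac{|K|}{12}(1+\delta_{ij})$ together with $\nabla\lambda_i\cdot\nabla\lambda_j=\tfrac{1}{4|K|^2}\sigma_i^t\sigma_j$. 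All of your intermediate identities check out (I verified the resulting formula numerically on concrete triangles), and your approach has the advantage of being coordinate-free and of isolating exactly where the edge vectors enter; the paper's approach has the advantage that the final identification with the $D_{ij}$ falls out of three one-line monomial integrals, whereas in your version the collapse of the bulky polynomial in $d_{11},d_{12},d_{22}$ to $D_{11},D_{22},D_{12}$ is precisely the step you leave as ``one checks that,'' which is the only real gap --- it should be written out (or at least reduced, via $d_{13}=-(d_{11}+d_{12})$ and $d_{23}=-(d_{12}+d_{22})$, to an explicit polynomial identity). Two further remarks: your appeal to the $P_1\leftrightarrow P_3$ symmetry to get the $|{\bf \ell}_2|^2$ coefficient from the $|{\bf \ell}_1|^2$ one is legitimate (that swap sends $({\bf \ell}_1,{\bf \ell}_2,{\bf \ell}_3)\mapsto(-{\bf \ell}_2,-{\bf \ell}_1,-{\bf \ell}_3)$ and exchanges $d_{13}\leftrightarrow d_{23}$ while fixing $d_{12}$); and your edge labeling (${\bf \ell}_2=P_3-P_2$) differs from the paper's (${\bf \ell}_2={\bf x}_1-{\bf x}_3$, cf.\ Figure~\ref{Affine_map_2d} and the proof of Lemma~\ref{Lemma:2D_Conv}). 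Since both labelings are cyclic with ${\bf \ell}_1+{\bf \ell}_2+{\bf \ell}_3=0$ and the identity is invariant under cyclic relabeling, this is harmless for the lemma itself, but you should align with the paper's convention if the same $d_{ij}$ are to be reused in (\ref{Nadlerformula}) and Lemma~\ref{Lemma:2D_Conv}.
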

\begin{proof}
The proof is similar to but easier than that of Lemma \ref{Lemma:2D_Conv}.
\end{proof}

\begin{remark}
Lemma \ref{Lemma:2D_H1} is also used in \cite{xie2014metric}.
\end{remark}

\begin{lemma}\label{Lemma:2D_Conv}
Let $u$ be a quadratic function on a triangle $K$, and $\Pi_hu$ be the Lagrangian linear
interpolation of $u$ on $K$. Assume ${\bf b}$ is a constant vector on $K$, the following relationship
holds:
\begin{equation}\label{2destimator_Conv}
\|\mathbf
b\cdot\nabla(u-\Pi_hu)\|^2_{L^2(K)}=\frac{1}{48|K|}\sum_{\substack{i,j=1,2\\i\leq
j}}D_{ij}k_ik_j,
\end{equation}
where
\begin{equation}\label{k1k2}
k_{1}=(b_{2},-b_{1}){\bf
\ell}_{1},\quad
k_{2}=(b_{2},-b_{1}){\bf
\ell}_{2}.
\end{equation}
\end{lemma}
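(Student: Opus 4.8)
The plan is to carry out the whole computation in barycentric coordinates $\lambda_1,\lambda_2,\lambda_3$ of $K$, which simultaneously covers Lemma~\ref{Lemma:2D_H1}. Write $e:=u-\Pi_hu$. Since $\Pi_hu$ is affine, $H(e)=H(u)$, and $e$ vanishes at the three vertices of $K$, so $e=c_{12}\lambda_1\lambda_2+c_{13}\lambda_1\lambda_3+c_{23}\lambda_2\lambda_3$ for some constants $c_{ij}$. Using $H(\lambda_i\lambda_j)=\nabla\lambda_i\nabla\lambda_j^{t}+\nabla\lambda_j\nabla\lambda_i^{t}$ together with the elementary facts $\nabla\lambda_i^{t}{\bf \ell}_i=0$ and $\nabla\lambda_i^{t}{\bf \ell}_j=\pm1$ for $i\neq j$, I would contract the identity $H(e)=H(u)$ on the left and right by the edge vectors and read off
\begin{equation*}
c_{12}=-\tfrac12 d_{33},\qquad c_{13}=-\tfrac12 d_{22},\qquad c_{23}=-\tfrac12 d_{11},
\end{equation*}
that is, $e=-\tfrac12\bigl(d_{11}\lambda_2\lambda_3+d_{22}\lambda_3\lambda_1+d_{33}\lambda_1\lambda_2\bigr)$; this is the standard quadratic interpolation-error identity underlying (\ref{Nadlerformula}).

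The second ingredient is the identity that turns the convective derivative of $\lambda_i$ into the scalar $k_i$. Because $\lambda_i$ is affine with $\nabla\lambda_i$ orthogonal to the opposite edge and $|\nabla\lambda_i|=|{\bf \ell}_i|/(2|K|)$, one has $\nabla\lambda_i=\sigma\,{\bf \ell}_i^{\perp}/(2|K|)$, where ${\bf \ell}_i^{\perp}$ is ${\bf \ell}_i$ rotated by $90^{\circ}$ and $\sigma=\pm1$ is fixed by the orientation of $K$; hence, with ${\bf b}$ constant on $K$,
\begin{equation*}
{\bf b}\cdot\nabla\lambda_i=\frac{\sigma}{2|K|}\,{\bf b}^{t}{\bf \ell}_i^{\perp}=\frac{\sigma}{2|K|}\,(b_2,-b_1){\bf \ell}_i=\frac{\sigma}{2|K|}\,k_i,\qquad i=1,2,3,
\end{equation*}
with $k_3=-k_1-k_2$ since ${\bf \ell}_1+{\bf \ell}_2+{\bf \ell}_3=\mathbf 0$. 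This is the convective analogue of $\nabla\lambda_i\cdot\nabla\lambda_j=\tfrac{1}{4|K|^2}{\bf \ell}_i^{t}{\bf \ell}_j$, which is precisely why (\ref{2destimator_Conv}) will differ from (\ref{2destimator_H1}) only through the substitution ${\bf \ell}_i^{t}{\bf \ell}_j\mapsto k_ik_j$.

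Differentiating the explicit form of $e$ and inserting the last identity gives ${\bf b}\cdot\nabla e=-\tfrac{\sigma}{4|K|}\sum_{m=1}^{3}A_m\lambda_m$ with $A_1=d_{22}k_3+d_{33}k_2$, $A_2=d_{33}k_1+d_{11}k_3$, $A_3=d_{11}k_2+d_{22}k_1$. I would then square, integrate over $K$ using $\int_K\lambda_m^2=|K|/6$ and $\int_K\lambda_m\lambda_n=|K|/12$ for $m\neq n$, to obtain
\begin{equation*}
\|{\bf b}\cdot\nabla(u-\Pi_hu)\|_{L^2(K)}^2=\frac{1}{192|K|}\Bigl[\,\bigl(A_1+A_2+A_3\bigr)^2+A_1^2+A_2^2+A_3^2\,\Bigr],
\end{equation*}
then eliminate $k_3$ via $k_3=-k_1-k_2$, and finally simplify the bracket to $4\bigl(D_{11}k_1^2+D_{12}k_1k_2+D_{22}k_2^2\bigr)$ with $D_{ij}$ as in (\ref{D_ij}); dividing $192$ by this $4$ yields the claimed constant $\tfrac{1}{48|K|}$.

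I expect the last simplification to be the only step requiring care. After eliminating $k_3$, the coefficients of $k_1^2$, $k_1k_2$, $k_2^2$ in the bracket come out as $2(2d_{12}+d_{22})^2+2d_{22}^2$, $8d_{12}^2$ and $2(2d_{12}+d_{11})^2+2d_{11}^2$, and these equal $4D_{11}$, $4D_{12}$, $4D_{22}$ only after invoking the relations $d_{11}=-(d_{12}+d_{13})$, $d_{22}=-(d_{12}+d_{23})$ and $2d_{12}=d_{33}-d_{11}-d_{22}$, all of which follow at once from ${\bf \ell}_3=-{\bf \ell}_1-{\bf \ell}_2$; for instance $2d_{12}+d_{22}=d_{12}-d_{23}$ turns $(2d_{12}+d_{22})^2+d_{22}^2$ into $(d_{12}-d_{23})^2+(d_{12}+d_{23})^2=2d_{12}^2+2d_{23}^2$. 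Everything else is routine bookkeeping, and running the same argument verbatim with $\nabla$ in place of ${\bf b}\cdot\nabla$ — equivalently ${\bf \ell}_i^{t}{\bf \ell}_j$ in place of $k_ik_j$ — gives the shorter proof of Lemma~\ref{Lemma:2D_H1}.
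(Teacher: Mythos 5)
Your proposal is correct, and it takes a genuinely different route from the paper's. The paper pulls everything back to the reference right triangle through the affine map $B_K=[\ell_1,-\ell_2]$, expands the pointwise error as $e=\frac12[d_{11}(\hat x^2-\hat x)+d_{22}(\hat y^2-\hat y)-2d_{12}\hat x\hat y]$, and evaluates moments of $\hat x,\hat y$ there, the $k_i$ entering through $\det(B_K)\,{\bf b}^tB_K^{-t}=(k_2,k_1)$; you instead stay on $K$, use the classical barycentric identity $e=-\frac12\sum_{i<j}\lambda_i\lambda_j\,(x_i-x_j)^tH(x_i-x_j)$ together with ${\bf b}\cdot\nabla\lambda_m=\pm k_j/(2|K|)$ for the opposite edge $\ell_j$, and finish with the symmetric quadrature $\int_K\lambda_m\lambda_n$. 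I verified your endgame: with $k_3=-k_1-k_2$ the coefficients of $k_1^2$, $k_1k_2$, $k_2^2$ in $(\sum_mA_m)^2+\sum_mA_m^2$ are indeed $2(2d_{12}+d_{22})^2+2d_{22}^2=4D_{11}$, $8d_{12}^2=4D_{12}$ and $2(2d_{12}+d_{11})^2+2d_{11}^2=4D_{22}$, so $192/4=48$ gives (\ref{2destimator_Conv}). Your route buys a uniform derivation of (\ref{Nadlerformula}), Lemma \ref{Lemma:2D_H1} and this lemma, and makes structurally transparent why (\ref{2destimator_Conv}) is (\ref{2destimator_H1}) with ${\bf \ell}_i^t{\bf \ell}_j\mapsto k_ik_j$; the paper's route avoids the final polynomial bookkeeping because the cross moments are computed directly. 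One caveat to flag in a write-up: your assignments $c_{12}=-\frac12 d_{33}$, ${\bf b}\cdot\nabla\lambda_i\propto k_i$ and $|\nabla\lambda_i|=|{\bf \ell}_i|/(2|K|)$ presuppose that ${\bf \ell}_i$ is the edge opposite $x_i$, whereas the paper's ${\bf \ell}_1=x_2-x_1$, ${\bf \ell}_2=x_1-x_3$ make ${\bf \ell}_1$ opposite $x_3$ and ${\bf \ell}_3$ opposite $x_1$, so in the paper's labels $c_{12}=-\frac12 d_{11}$ and ${\bf b}\cdot\nabla\lambda_1\propto k_3$. This only permutes the $A_m$, and since your bracket is symmetric in them the final identity is unaffected, but the vertices should be relabelled (or the convention stated) to match the paper's definition (\ref{k1k2}).
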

\begin{proof}
Following \cite{loseille2011continuous}, we first define the reference element $K_r$
by its three vertices coordinates:
\begin{equation*}
\hat{\bf x}_1=(0,0)^t, \hat{\bf x}_2=(1,0)^t,\,\, \mbox{and}\,\,
\hat{\bf x}_3=(0,1)^t.
\end{equation*}
All the terms are computed on $K_r$ and then converted onto the
element $K$ at hand by using the following affine mapping:
\begin{equation*}
{\bf x}={\bf x}_1+B_K\hat{\bf x}\,\, \mbox{with}\,\, B_K=[{\bf
\ell}_1, -{\bf \ell}_2],\,\, {\bf x}\in K, \,\,\hat{\bf x}\in K_r,
\end{equation*}
where
\begin{equation*}
{\bf \ell}_1={\bf x}_2-{\bf x}_1,\,\,\mbox{and}\,\,  \, {\bf
\ell}_2={\bf x}_1-{\bf x}_3.
\end{equation*}
In the frame of $K_r$, the quadratic function $u$ turns into:
\begin{equation*}
u({\bf x}(\hat{\bf x}))=\frac{1}{2}{\bf x}_1^tH(u){\bf
x}_1+\frac{1}{2}{\bf x}_1^tH(u)B_K\hat{\bf x} +\frac{1}{2}\hat{\bf
x}^tB_K^tH(u){\bf x}_1+\frac{1}{2}\hat{\bf x}^tB_K^tH(u)B_K\hat{\bf x}.
\end{equation*}
Since the linear interpolation is concerned, linear and constant
terms of $u({\bf x}(\hat{\bf x}))$ are exactly interpolated,
these terms are neglected and only quadratic terms are kept. So we
could set $u({\bf x})=\frac{1}{2}\hat{\bf x}^tB_K^tH(u)B_K\hat{\bf
x}$, with a matrix form:
\begin{equation*}
u({\bf x}(\hat{\bf x}))=\frac{1}{2}\hat{\bf x}^tB_K^tH(u)B_K\hat{\bf
x}=\frac{1}{2} \left(\begin{array}{c}
 \hat{x} \\
 \hat{y}
\end{array}\right)^t
\left[\begin{array}{cc}
d_{11} & -d_{12} \\
-d_{21} & d_{22}
\end{array}\right]
\left(\begin{array}{c}
 \hat{x} \\
 \hat{y}
\end{array}\right).
\end{equation*}
Then the exact point-wise interpolation error of the function $u$ in $K_r$ reads:
\begin{equation}\label{2derrorexpan}
e({\bf x}(\hat{\bf x}))=\frac{1}{2}
[d_{11}(\hat{x}^2-\hat{x})+d_{22}(\hat{y}^2-\hat{y})-2d_{12}\hat{x}\hat{y}].
\end{equation}
It is obvious that the following formulas hold
\begin{equation*}
B_K=[{\bf \ell}_1, -{\bf \ell}_2]= \left[\begin{array}{cc}
x_{2}-x_{1} & x_{3}-x_{1} \\
y_{2}-y_{1} & y_{3}-y_{1}
\end{array}\right], \quad
B_K^{-1}=\frac{1}{\det(B_K)} \left[\begin{array}{cc}
y_{3}-y_{1} & x_{1}-x_{3} \\
y_{1}-y_{2} & x_{2}-x_{1}
\end{array}\right].
\end{equation*}
After that it is easily to obtain
\begin{equation*}
\det(B_K)\cdot{\mathbf b}^{t}B_k^{-t}={\mathbf b}^{t}
\left[\begin{array}{cc}
y_{3}-y_{1} & y_{1}-y_{2} \\
x_{1}-x_{3} & x_{2}-x_{1}
\end{array}\right]=(b_{2},-b_{1})[{\bf
\ell}_{2}, {\bf \ell}_{1}]=(k_{2},k_{1}).
\end{equation*}
Then we have
\begin{align}\label{2d_derivation}
\int_K{\big(}\mathbf b\cdot\nabla_{\bf x}e({\bf x}){\big)}^2dxdy &=
\det(B_K)\int_{K_r}{\Big(}{\mathbf b}^{t} B_K^{-t}\nabla_{\hat{\bf x}}e{\big(}{\bf x}(\hat{\bf x}){\big)}{\Big)}^2d\hat{x}d\hat{y}\nonumber\\
&=\frac{1}{\det(B_K)}{\Bigg [}k_{2}^2\int_{K_r}{\Big (}\frac{\partial e({\bf x}(\hat{\bf x}))}{\partial \hat{x}}{\Big )}^2d\hat{x}d\hat{y}
+k_{1}^2\int_{K_r}{\Big (}\frac{\partial e({\bf x}(\hat{\bf x}))}{\partial \hat{y}}{\Big )}^2d\hat{x}d\hat{y}\nonumber\\
&+2k_{1}k_{2}\int_{K_r}\frac{\partial e({\bf x}(\hat{\bf
x}))}{\partial \hat{x}}\frac{\partial e({\bf x}(\hat{\bf
x}))}{\partial \hat{y}}d\hat{x}d\hat{y}{\Bigg ]}.
\end{align}
Due to (\ref{2derrorexpan}), we can easily obtain
\begin{equation*}
\nabla_{\hat{\bf x}}e({\bf x}(\hat{\bf x}))=\left(\begin{array}{c}
\partial e({\bf x}(\hat{\bf x}))/\partial \hat{x} \\
\partial e({\bf x}(\hat{\bf x}))/\partial \hat{y}
\end{array}\right)=
\frac{1}{2} \left(\begin{array}{c}
d_{11}(2\hat{x}-1)-2d_{12}\hat{y} \\
d_{22}(2\hat{y}-1)-2d_{12}\hat{x}
\end{array}\right).
\end{equation*}
After simple calculation, the following results hold:
\begin{align*}
  && \int_{K_r} \hat{x}^2d\hat{x}d\hat{y}=\int_{K_r} \hat{y}^2d\hat{x}d\hat{y}=\frac{1}{12},\ \ \
   \int_{K_r}\hat{x} \hat{y}d\hat{x}d\hat{y}=\frac{1}{24},\nonumber\\
  &&\int_{K_r} \hat{x}d\hat{x}d\hat{y}=\int_{K_r} \hat{y}d\hat{x}d\hat{y}=\frac{1}{6},\ \ \
  \int_{K_r}1d\hat{x}d\hat{y}=\frac{1}{2}.
\end{align*}
Then we have
\begin{equation}\label{2dpart1}
24\int_{K_r}{\Big (}\frac{\partial e({\bf x}(\hat{\bf x}))}{\partial
\hat{x}}{\Big )}^2d\hat{x}d\hat{y}= d_{12}^2+d_{13}^2=D_{22},
\end{equation}

\begin{equation}\label{2dpart2}
24\int_{K_r}{\Big (}\frac{\partial e({\bf x}(\hat{\bf x}))}{\partial
\hat{y}}{\Big )}^2d\hat{x}d\hat{y}= d_{12}^2+d_{23}^2=D_{11},
\end{equation}

\begin{equation}\label{2dpart3}
48\int_{K_r}\frac{\partial e({\bf x}(\hat{\bf x}))}{\partial
\hat{x}}\frac{\partial e({\bf x}(\hat{\bf x}))}{\partial
\hat{y}}d\hat{x}d\hat{y}=2d_{12}^2=D_{12}.
\end{equation}
Substituting (\ref{2dpart1}), (\ref{2dpart2}), (\ref{2dpart3}) into
(\ref{2d_derivation}) we get the desired results
(\ref{2destimator_Conv}) due to the fact $\det(B_k)=2|K|$.
\end{proof}

\begin{lemma}\label{Lemma:2D_Lapl}
Let $u$ be a quadratic function on a triangle $K$. The following
relationship holds:
\begin{equation}\label{2destimator_Lapl}
\|\Delta u\|^2_{L^2(K)}=\frac{{\Big(}{|\bf \ell}_{2}|^2d_{11}-2{\bf
\ell}_{1}^t{\bf \ell}_{2}d_{12}+{|\bf
\ell}_{1}|^2d_{22}{\Big)}^2}{16|K|^3}.
\end{equation}
\end{lemma}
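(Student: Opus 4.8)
The plan is to exploit the fact that, since $u$ is quadratic, its Hessian $H(u)$ is constant on $K$ and hence $\Delta u=\operatorname{tr}H(u)$ is a constant. Consequently
\[
\|\Delta u\|_{L^2(K)}^2=|K|\,\big(\operatorname{tr}H(u)\big)^2,
\]
and the whole task reduces to expressing the scalar $\operatorname{tr}H(u)$ in terms of the directed edges ${\bf \ell}_1,{\bf \ell}_2$ and the quantities $d_{ij}={\bf \ell}_i^tH(u){\bf \ell}_j$. There is no layer structure or interpolation to estimate here; everything is a short linear-algebra identity.

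First I would reuse the change of variables already set up in the proof of Lemma \ref{Lemma:2D_Conv}. With $B_K=[{\bf \ell}_1,-{\bf \ell}_2]$ that proof records the identity
\[
B_K^tH(u)B_K=\begin{pmatrix} d_{11} & -d_{12}\\ -d_{12} & d_{22}\end{pmatrix},
\qquad\text{hence}\qquad
H(u)=B_K^{-t}\begin{pmatrix} d_{11} & -d_{12}\\ -d_{12} & d_{22}\end{pmatrix}B_K^{-1},
\]
where $d_{12}=d_{21}$ because $H(u)$ is symmetric. Using the cyclic invariance of the trace,
\[
\operatorname{tr}H(u)=\operatorname{tr}\!\left(\begin{pmatrix} d_{11} & -d_{12}\\ -d_{12} & d_{22}\end{pmatrix}(B_K^tB_K)^{-1}\right).
\]
I would then compute $(B_K^tB_K)^{-1}$ explicitly: $B_K^tB_K$ is the Gram matrix $\left(\begin{smallmatrix} |{\bf \ell}_1|^2 & -{\bf \ell}_1^t{\bf \ell}_2\\ -{\bf \ell}_1^t{\bf \ell}_2 & |{\bf \ell}_2|^2\end{smallmatrix}\right)$, whose determinant equals $(\det B_K)^2=(2|K|)^2=4|K|^2$, so that
\[
(B_K^tB_K)^{-1}=\frac{1}{4|K|^2}\begin{pmatrix} |{\bf \ell}_2|^2 & {\bf \ell}_1^t{\bf \ell}_2\\ {\bf \ell}_1^t{\bf \ell}_2 & |{\bf \ell}_1|^2\end{pmatrix}.
\]
Carrying out the $2\times2$ matrix product and taking the trace gives
\[
\operatorname{tr}H(u)=\frac{|{\bf \ell}_2|^2d_{11}-2{\bf \ell}_1^t{\bf \ell}_2\,d_{12}+|{\bf \ell}_1|^2d_{22}}{4|K|^2},
\]
and squaring this and multiplying by $|K|$ yields exactly \eqref{2destimator_Lapl}.

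I do not expect a real obstacle; the only points that need a little care are bookkeeping in nature: matching the sign conventions in the pulled-back Hessian with those fixed in the proof of Lemma \ref{Lemma:2D_Conv}, using the identity $\det B_K=2|K|$ consistently with that proof, and noting that—unlike Nadler's formula \eqref{Nadlerformula}—the right-hand side of \eqref{2destimator_Lapl} is not symmetric in the three sides ${\bf \ell}_1,{\bf \ell}_2,{\bf \ell}_3$. This asymmetry is expected, since $\operatorname{tr}H(u)$ is one particular linear combination of the $d_{ij}$ rather than a symmetric functional of the three side data; one can cross-check the final expression, if desired, by evaluating it on the reference triangle $K_r$, where ${\bf \ell}_1=(1,0)^t$, ${\bf \ell}_2=(0,-1)^t$ and the formula collapses to $(d_{11}+d_{22})^2/(16|K_r|^3)$ with $|K_r|=\tfrac12$.
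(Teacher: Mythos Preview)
Your argument is correct. It is precisely the ``rather direct and easy way'' that the paper itself flags in the remark following the lemma: since $\Delta u=\operatorname{tr}H(u)$ is constant, one only needs to express this scalar through the edge data and multiply by $|K|$. The paper's written proof instead carries the computation through the affine map to $K_r$ (as in Lemma~\ref{Lemma:2D_Conv}), obtaining $\int_K(\Delta u)^2\,dx\,dy=\det(B_K)\int_{K_r}\big(\cdot\big)^2\,d\hat x\,d\hat y$ with the integrand already constant, purely for consistency of presentation with the preceding lemmas. Your trace/Gram-matrix formulation is cleaner and avoids the reference-element detour; the paper's version has the minor advantage of keeping all four error terms in a uniform frame. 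Substantively the two are the same computation.
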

\begin{proof}
Similar to that of Lemma \ref{Lemma:2D_Conv} we can easily obtain
\begin{align*}
\int_K(\Delta u)^2dxdy &=
\det(B_K)\int_{K_r}{\Bigg(}\frac{{|\bf \ell}_{2}|^2}{\det(B_K)^2}d_{11}-2\frac{{\bf \ell}_{1}^t{\bf \ell}_{2}}{\det(B_K)^2}d_{12}+\frac{{|\bf \ell}_{1}|^2}{\det(B_K)^2}d_{22}{\Bigg)}^2d\hat{x}d\hat{y}\\
&=\frac{{\Big(}{|\bf \ell}_{2}|^2d_{11}-2{\bf \ell}_{1}^t{\bf
\ell}_{2}d_{12}+{|\bf
\ell}_{1}|^2d_{22}{\Big)}^2}{2\det(B_K)^3}=\frac{{\Big(}{|\bf \ell}_{2}|^2d_{11}-2{\bf \ell}_{1}^t{\bf
\ell}_{2}d_{12}+{|\bf \ell}_{1}|^2d_{22}{\Big)}^2}{16|K|^3}.
\end{align*}
This is the desired result (\ref{2destimator_Lapl}) and the proof is complete.
\end{proof}
\begin{remark}
Since $\Delta u$ is a constant under our assumption, there is a rather direct and easy way to prove Lemma \ref{Lemma:2D_Lapl}. However, we still use the frame of proof for Lemma \ref{Lemma:2D_Conv} to make the error expression be a consistent manner.
\end{remark}

\begin{theorem}\label{Thm2derrorbound}
Assume the exact solution $u$ is quadratic on each element $K$, the error of the
stabilized finite element approximation has the following estimate
\begin{equation}\label{Error_Estimate_In_Energy_Norm}
\|u-u_h\|_h^2\leq  C\sum_{K\in\mathcal{T}_h}E_K,
\end{equation}
with
\begin{align*}
E_K&=\frac{|K|}{180\alpha_K}{\Bigg[}{\Big(}\sum_{i=1}^{3}d_{ii}{\Big)}^2+\sum_{i=1}^{3}d_{ii}^2{\Bigg]}
+ \frac{\varepsilon}{48|K|}\sum_{\substack{i,j=1,2\\i\leq
j}}D_{ij}{\bf \ell}_i^t{\bf
\ell}_j
 + \frac{\alpha_K}{48|K|}\sum_{\substack{i,j=1,2\\i\leq
j}}D_{ij}k_ik_j\\
&+\frac{\alpha_K\varepsilon^2}{16|K|^3}{\Big(}{|\bf
\ell}_{2}|^2d_{11}-2{\bf \ell}_{1}^t{\bf \ell}_{2}d_{12}+{|\bf
\ell}_{1}|^2d_{22}{\Big)}^2,
\end{align*}
where $D_{ij}$ and $k_i$ are defined by (\ref{D_ij}) and (\ref{k1k2}), respectively.
\end{theorem}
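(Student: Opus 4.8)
The plan is to combine the abstract error bound of Lemma~\ref{Lemma:2derrorbound} with the four element-wise identities established in Section~\ref{Section:Basic_Estimate}. First I would invoke Lemma~\ref{Lemma:2derrorbound}, which already reduces $\|u-u_h\|_h^2$ to the constant $C$ times the sum over $K\in\mathcal{T}_h$ of the four local quantities $\alpha_K^{-1}\|u-\Pi_h u\|_{L^2(K)}^2$, $\varepsilon\|\nabla(u-\Pi_h u)\|_{L^2(K)}^2$, $\alpha_K\|\mathbf b\cdot\nabla(u-\Pi_h u)\|_{L^2(K)}^2$ and $\alpha_K\varepsilon^2\|\Delta(u-\Pi_h u)\|_{L^2(K)}^2$. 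The strategy is then simply to evaluate each of these four terms exactly using the hypothesis that $u$ is quadratic on $K$, matching them one-to-one with the four summands of $E_K$.

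Concretely: for the first term I would apply Nadler's identity~(\ref{Nadlerformula}), giving $\|u-\Pi_h u\|_{L^2(K)}^2=\frac{|K|}{180}\bigl[(\sum_{i=1}^3 d_{ii})^2+\sum_{i=1}^3 d_{ii}^2\bigr]$, and divide by $\alpha_K$ to obtain the first summand. For the second term I would use Lemma~\ref{Lemma:2D_H1} verbatim and multiply by $\varepsilon$. For the third term I would use Lemma~\ref{Lemma:2D_Conv}, which applies since ${\bf b}$ is treated as the constant vector required there, and multiply by $\alpha_K$; this yields the third summand with the $k_i$ of (\ref{k1k2}). For the last term the key observation is that $\Pi_h u|_K$ is affine, hence $\Delta\Pi_h u|_K=0$ and $\Delta(u-\Pi_h u)=\Delta u$ on $K$, so Lemma~\ref{Lemma:2D_Lapl} applies directly; multiplying by $\alpha_K\varepsilon^2$ gives the fourth summand. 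Adding the four contributions and summing over $K$ then reproduces exactly $C\sum_{K\in\mathcal{T}_h}E_K$.

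I do not expect a genuine obstacle here, since all the substantive computation is carried out in the preceding lemmas; the proof is a bookkeeping assembly. The only points deserving a line of care are (i) that a general quadratic on $K$ is $u({\bf x})=\frac12{\bf x}^tH(u){\bf x}$ plus affine and constant parts that $\Pi_h$ reproduces exactly, so only the Hessian contributes to the interpolation error and the lemmas (stated for the purely quadratic $u$) may be applied after discarding those parts; and (ii) the identity $\Delta(u-\Pi_h u)=\Delta u$ used in the fourth term. One should also note that the constant $C$ is the one inherited from Lemma~\ref{Lemma:2derrorbound}, independent of $h$, $\varepsilon$, the mesh anisotropy and the parameters $\alpha_K$, so the bound~(\ref{Error_Estimate_In_Energy_Norm}) is of the asserted form and is the functional whose equidistribution will be optimized in Section~\ref{Section_Metric_Tensor}.
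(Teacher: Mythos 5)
Your proposal is correct and follows exactly the paper's route: the paper's proof is a one-line assembly of Lemma~\ref{Lemma:2derrorbound} with Nadler's identity~(\ref{Nadlerformula}) and Lemmas~\ref{Lemma:2D_H1}, \ref{Lemma:2D_Conv}, \ref{Lemma:2D_Lapl}, which is precisely the bookkeeping you describe. Your extra remarks (that $\Pi_h$ reproduces the affine part so only the Hessian contributes, and that $\Delta(u-\Pi_h u)=\Delta u$ on $K$) are correct and, if anything, make the argument more explicit than the paper's.
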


\begin{proof}
Together with Lemmas \ref{Lemma:2derrorbound}, \ref{Lemma:2D_H1}, \ref{Lemma:2D_Conv} and \ref{Lemma:2D_Lapl}, the conclusion is obtained directly.
\end{proof}
\noindent
Even the error estimate (\ref{Error_Estimate_In_Energy_Norm}) is only valid for those piecewise quadratic functions, however, it could catch the
main properties of the errors for general functions.
In fact, the treatment to replace the general solution by its second order Taylor expansion yields a reliable and efficient
estimator of the interpolation error for general functions provided
a saturation assumption is valid
\cite{agouzal2010minimization,dorfler2002small}.

For simplicity of notation in the following discussion, $\forall K\in\mathcal{T}_h$, we denote by
\begin{equation*}
Q_{1,K}=\frac{|K|}{180}{\Bigg[}{\Big(}\sum_{i=1}^{3}d_{ii}{\Big)}^2+\sum_{i=1}^{3}d_{ii}^2{\Bigg]},\quad
Q_{2,K}=\frac{1}{48|K|}\sum_{\substack{i,j=1,2\\i\leq
j}}D_{ij}{\bf \ell}_i^t{\bf
\ell}_j,
\end{equation*}
\begin{equation*}
\widetilde{Q}_{2,K}=\frac{1}{48|K|}\sum_{\substack{i,j=1,2\\i\leq
j}}D_{ij}k_ik_j,\quad
Q_{3,K}=\frac{1}{16|K|^3}{\Big(}{|\bf
\ell}_{2}|^2d_{11}-2{\bf \ell}_{1}^t{\bf \ell}_{2}d_{12}+{|\bf
\ell}_{1}|^2d_{22}{\Big)}^2.
\end{equation*}
And then $E_K$ in (\ref{Error_Estimate_In_Energy_Norm}) can be recast into
\begin{equation*}
E_K=Q_{1,K}\cdot\alpha_K^{-1} + Q_{2,K}\cdot\varepsilon +
\widetilde{Q}_{2,K}\cdot\alpha_K
+Q_{3,K}\cdot \alpha_K\varepsilon^2.
\end{equation*}

\section{Metric tensors for anisotropic mesh adaptation}\label{Section_Metric_Tensor}
We now use the error estimates obtained in Section \ref{Section:Basic_Estimate} to develop the metric tensor for the discretized error in $\|\cdot\|_h$ norm and give a new definition of the stability parameters which are optimal in a certain sense. In the field of mesh generation, the metric tensor, $\mathcal{M}({\bf x})$, is commonly used such that an anisotropic mesh is generated as a quasi-uniform mesh in the metric space determined by
$\mathcal{M}({\bf x})$. Mathematically, this can be interpreted as
the following three requirements (\cite{huang2005metric}). {\bf 1. The shape requirement.} The elements of the new mesh, $\mathcal{T}_h$, are (or are close to being) equilateral in the metric. {\bf 2. The size requirement.} The elements of the new mesh
$\mathcal{T}_h$ have a unitary volume in the metric. {\bf 3. The equidistribution requirement. } The anisotropic mesh is
required to minimize the error for a given number of mesh points (or
equidistribute the error on every element).

\subsection{Optimal metric tensor and stabilized parameters}

\begin{figure}[ht]
  \centering
  \includegraphics[width=12cm]{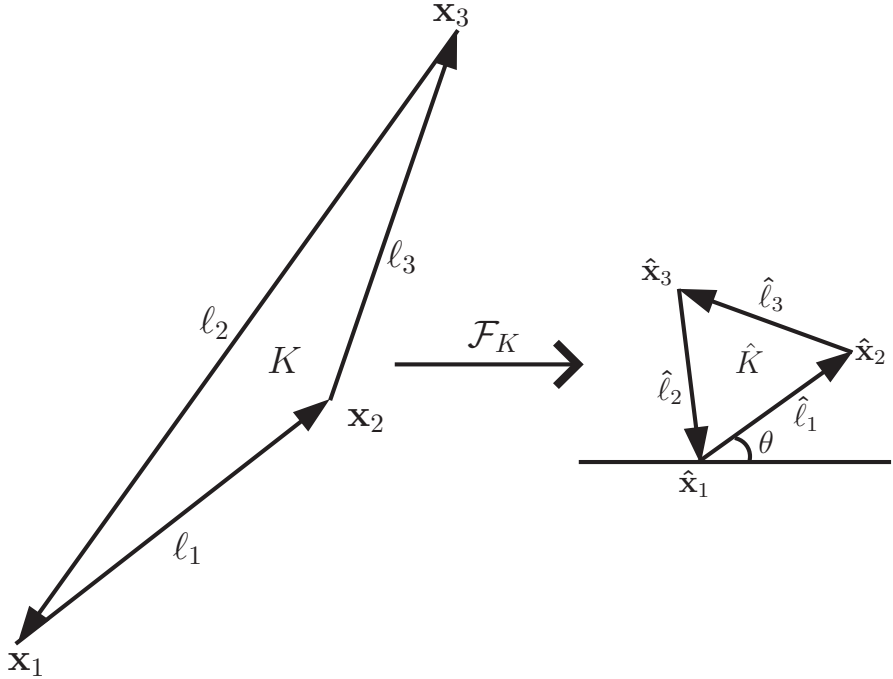}
  \put(-165,120){\Large{$\mathcal{F}_K$}}
  \put(-337,-2){\Large{${\bf x}_1$}} \put(-210,90){\Large{${\bf x}_2$}} \put(-178,243){\Large{${\bf x}_3$}}
  \put(-240,110){\Large{$K$}} \put(-280,40){\Large{ ${\bf \ell}_{1}$}} \put(-270,125){\Large{ ${\bf \ell}_{2}$}}
  \put(-200,150){\Large{ ${\bf \ell}_{3}$}}
  \put(-86,65){\large{${\bf \hat{x}}_1$}} \put(-20,115){\large{${\bf \hat{x}}_2$}} \put(-100,145){\large{${\bf \hat{x}}_3$}}
  \put(-65,110){\large{$\hat{K}$}} \put(-47,90){\large{ ${\bf \hat{\ell}}_{1}$}}
  \put(-98,100){\large{ ${\bf \hat{\ell}}_{2}$}} \put(-60,135){\large{ ${\bf \hat{\ell}}_{3}$}}
  \put(-60,80){\large{ ${\bf \theta}$}}
  \caption{Affine map ${\bf \hat{x}}=\mathcal{F}_K{\bf x}$ from triangle $K$ to the
reference triangle $\hat{K}$.}\label{Affine_map_2d}
  \end{figure}
We derive the monitor function $M({\bf x})$ first. At this time, we just need the shape
and equidistribution requirements. Assume $H(u)$ is
a symmetric positive definite matrix on every point ${\bf x}$ and
this restriction can be explained by Remark 2 in \cite{loseille2011continuous}. Set
$M({\bf x})=C({\bf x})H(u)$. Denoted by $H_K$ and $C_K$ the $L^2$
projection of $H(u)$ and $C({\bf x})$ to the constant space on $K$, and
$M_{K}=C_KH_K$. Since $H_K$ is a symmetric positive definite matrix,
we do the singular value decomposition $H_K=R^T\Lambda R$, where
$\Lambda=\mbox{diag}(\lambda_{1,K},\lambda_{2,K})$ is the diagonal matrix of
the corresponding eigenvalues ($\lambda_{1,K},\lambda_{2,K}>0$) and $R$ is
the orthogonal matrix with rows being the eigenvectors of $H_K$.
Denote by $F_K$ and ${\bf t}_K$ the matrix and the vector defining
the invertible affine map $\hat{\bf x}=\mathcal{F}_K({\bf
x})=F_K{\bf x} + {\bf t}_K$ from the generic element $K$ to the
reference triangle $\hat{K}$ (see Figure \ref{Affine_map_2d}). Here
we take $\hat{K}$ as an equilateral triangle with one edge which has angle $\theta$ with the horizontal line. Let $M_{K}=F_K^TF_K$, then
$F_K=C_K^{\frac{1}{2}}\Lambda^{\frac{1}{2}} R$. Mathematically, the shape
requirement can be expressed as
\begin{equation}\label{shape2d}
|\hat{\ell}_1|=|\hat{\ell}_2|=|\hat{\ell}_3|=L,\quad
\frac{\hat{\ell}_{1}\cdot\hat{\ell}_{3}}{|\hat{\ell}_{1}|\cdot|\hat{\ell}_{3}|}=
\frac{\hat{\ell}_{2}\cdot\hat{\ell}_{3}}{|\hat{\ell}_{2}|\cdot|\hat{\ell}_{3}|}=
\frac{\hat{\ell}_{1}\cdot\hat{\ell}_{2}}{|\hat{\ell}_{1}|\cdot|\hat{\ell}_{2}|}=\cos(2\pi/3),
\end{equation}
where $L$ is a constant.
\begin{theorem}\label{theorem:Q_iK}
Under the shape
requirement, the following results hold:
\begin{equation*}
Q_{1,K}=\frac{L^4|K|}{15C_K^2},\quad
Q_{2,K}=\frac{L^4{\rm tr}(H_K)}{32\sqrt{3}C_K^2\det(H_K)^{\frac{1}{2}}},\quad
Q_{3,K}=\frac{3L^4{\rm tr}(H_K)^2}{16C_K^2|K|\det(H_K)},
\end{equation*}
and
\begin{equation*}
\widetilde{Q}_{2,K}=\frac{L^4\det(H_K)^{\frac{1}{2}}}{24C_K^2}
{\Big(}\frac{A_1^2}{\lambda_{1,K}}+\frac{A_2^2}{\lambda_{2,K}}{\Big)}=
\frac{L^4}{32\sqrt{3}C_K^2}\cdot\frac{{\bf b}^tH_K{\bf b}}{\det(H_K)^{\frac{1}{2}}},
\end{equation*}
where
\begin{equation*}
 {\bf A}=\left(\begin{array}{c}
 A_1 \\
 A_2
\end{array}\right)=
R\left[\begin{array}{cc}
0 & 1 \\
-1 & 0
\end{array}\right]
{\bf b}.
\end{equation*}
\end{theorem}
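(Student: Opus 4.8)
The plan is to transport every edge‑based quantity ($d_{ij}$, $\ell_i^t\ell_j$, $k_i$) from the physical element $K$ to the equilateral reference triangle $\hat K$ via the affine map $\mathcal F_K$, and then read off all four expressions from the shape requirement (\ref{shape2d}) using only the defining relation $F_K^tF_K=M_K=C_KH_K$. Writing $\hat\ell_i=F_K\ell_i$ for the edge vectors of $\hat K$, I would first record the consequences of that relation: $F_K^{-t}H_KF_K^{-1}=C_K^{-1}I$, $F_K^{-1}F_K^{-t}=(F_K^tF_K)^{-1}=C_K^{-1}H_K^{-1}$, and the area identity
\begin{equation*}
|K|=\frac{|\hat K|}{|\det F_K|}=\frac{\sqrt3\,L^2}{4\,C_K\sqrt{\det H_K}}\,,\qquad\text{i.e.}\qquad |K|^2=\frac{3L^4}{16\,C_K^2\det H_K}\,,
\end{equation*}
where $|\det F_K|=C_K(\det\Lambda)^{1/2}=C_K(\det H_K)^{1/2}$ and $|\hat K|=\tfrac{\sqrt3}{4}L^2$ by the shape requirement.

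From the first identity, $d_{ij}=\ell_i^tH_K\ell_j=\hat\ell_i^t\big(F_K^{-t}H_KF_K^{-1}\big)\hat\ell_j=C_K^{-1}\hat\ell_i\cdot\hat\ell_j$, so (\ref{shape2d}) forces $d_{ii}=L^2/C_K$ for all $i$ and $d_{ij}=-L^2/(2C_K)$ for all $i\neq j$; in particular $D_{11}=D_{22}=D_{12}=2d_{12}^2=L^4/(2C_K^2)$. Combining the second identity with the fact that $\sum_{i=1}^3\hat\ell_i\hat\ell_i^t$ is invariant under the $120^\circ$ rotational symmetry of $\hat K$ — hence equals $\tfrac12\operatorname{tr}\!\big(\sum_i\hat\ell_i\hat\ell_i^t\big)I=\tfrac{3L^2}{2}I$ — I obtain the single tensor identity
\begin{equation*}
\sum_{i=1}^3\ell_i\ell_i^t=F_K^{-1}\Big(\sum_{i=1}^3\hat\ell_i\hat\ell_i^t\Big)F_K^{-t}=\frac{3L^2}{2}F_K^{-1}F_K^{-t}=\frac{3L^2}{2C_K}H_K^{-1}\,,
\end{equation*}
so that $\sum_i|\ell_i|^2=\tfrac{3L^2}{2C_K}\operatorname{tr}(H_K^{-1})=\tfrac{3L^2}{2C_K}\operatorname{tr}(H_K)/\det H_K$.

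With these in hand each evaluation becomes a short substitution. For $Q_{1,K}$ I plug $d_{ii}=L^2/C_K$ into $\tfrac{|K|}{180}\big[(\sum_i d_{ii})^2+\sum_i d_{ii}^2\big]=\tfrac{|K|}{180}(9+3)L^4/C_K^2$. For $Q_{3,K}$ I would use that $\Delta u\equiv\operatorname{tr}(H_K)$ is constant on $K$ (since $u$ is quadratic), so by Lemma~\ref{Lemma:2D_Lapl} $Q_{3,K}=\|\Delta u\|_{L^2(K)}^2=|K|\operatorname{tr}(H_K)^2$, and then replace one factor $|K|$ via the area identity (equivalently, substitute the $d_{ij}$ and edge data directly, the bracket in $Q_{3,K}$ collapsing to $d_{11}\big(|\ell_1|^2+|\ell_2|^2+\ell_1^t\ell_2\big)=\tfrac12 d_{11}\sum_i|\ell_i|^2$). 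For the two convective terms, since $D_{11}=D_{22}=D_{12}$, for any scalars or vectors $X_1,X_2$ with $X_3:=-(X_1+X_2)$ one has $\sum_{i\le j}D_{ij}X_iX_j=\tfrac{L^4}{2C_K^2}(X_1^2+X_1X_2+X_2^2)=\tfrac{L^4}{4C_K^2}\sum_{i=1}^3X_i^2$; taking $X_i=\ell_i$ and dividing by $48|K|$ gives $Q_{2,K}$ after the area identity, while taking $X_i=k_i=(b_2,-b_1)\ell_i$ gives $\sum_i k_i^2=(\mathbf b^\perp)^t\big(\sum_i\ell_i\ell_i^t\big)\mathbf b^\perp=\tfrac{3L^2}{2C_K}(\mathbf b^\perp)^tH_K^{-1}\mathbf b^\perp$ with $\mathbf b^\perp=(b_2,-b_1)^t$. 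The two displayed forms of $\widetilde Q_{2,K}$ then come from the two evaluations of $(\mathbf b^\perp)^tH_K^{-1}\mathbf b^\perp$: writing $\mathbf b^\perp=R^t\mathbf A$ yields $\mathbf A^t\Lambda^{-1}\mathbf A=A_1^2/\lambda_{1,K}+A_2^2/\lambda_{2,K}$, whereas the $2\times2$ adjugate identity $J^tSJ=(\det S)S^{-1}$ for symmetric $S$ (with $J$ the rotation $(b_1,b_2)\mapsto(b_2,-b_1)$, so $\mathbf b^\perp=J\mathbf b$) yields $\mathbf b^tH_K\mathbf b/\det H_K$.

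The main obstacle, beyond careful bookkeeping, will be verifying that the final expressions do not depend on the free orientation angle $\theta$ of $\hat K$ — equivalently, on the left orthogonal factor by which $F_K$ is only determined. This works precisely because of the structure above: once (\ref{shape2d}) has collapsed every $d_{ij}$ and every $D_{ij}$ to a common value, each $Q_{\cdot,K}$ reduces to a trace or a fixed quadratic form of $\sum_{i=1}^3\ell_i\ell_i^t=\tfrac{3L^2}{2C_K}H_K^{-1}$, and this tensor depends on $F_K$ only through $M_K=F_K^tF_K$. The rotational‑symmetry step $\sum_i\hat\ell_i\hat\ell_i^t\propto I$ is the one mildly non‑mechanical ingredient; everything else rests on the elementary identities $|X_1|^2+X_1X_2+|X_2|^2=\tfrac12\sum_{i=1}^3|X_i|^2$ and $\operatorname{tr}(H_K^{-1})=\operatorname{tr}(H_K)/\det H_K$ for $2\times2$ matrices.
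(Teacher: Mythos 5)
Your proof is correct, and it follows the same skeleton as the paper's: transport the edge data to the equilateral reference triangle through $F_K$ with $F_K^tF_K=C_KH_K$, use the shape requirement to collapse every $d_{ij}$ and $D_{ij}$ to a common constant, and convert back with the area identity $|K|=|\hat K|/(C_K\sqrt{\det H_K})$. Where you genuinely differ is in evaluating the surviving edge sums: the paper writes $\hat\ell_1=L(\cos\theta,\sin\theta)^t$, $\hat\ell_2=-L\big(\cos(\tfrac{\pi}{3}+\theta),\sin(\tfrac{\pi}{3}+\theta)\big)^t$ and checks by explicit trigonometric cancellation that $\sum_{i\le j}(\Lambda^{-\frac12}\hat\ell_i)\cdot(\Lambda^{-\frac12}\hat\ell_j)=\tfrac34(\lambda_{1,K}^{-1}+\lambda_{2,K}^{-1})$, whereas you invoke the rotational symmetry $\sum_i\hat\ell_i\hat\ell_i^t=\tfrac{3L^2}{2}I$ to obtain the single tensor identity $\sum_i\ell_i\ell_i^t=\tfrac{3L^2}{2C_K}H_K^{-1}$ and then read off all four quantities as traces or quadratic forms of this one tensor. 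Your route makes the independence of $\theta$ (and of the left orthogonal freedom in $F_K$) manifest rather than an outcome of cancellation, and it also supplies, via the adjugate identity $J^tSJ=(\det S)S^{-1}$, the step $A_1^2/\lambda_{1,K}+A_2^2/\lambda_{2,K}={\bf b}^tH_K{\bf b}/\det(H_K)$ that the paper asserts without comment; the shortcut $Q_{3,K}=|K|\,\mathrm{tr}(H_K)^2$ is likewise legitimate since $\Delta u$ is constant on $K$. One byproduct worth recording: your computation gives $\widetilde Q_{2,K}=\frac{L^4\sqrt{\det H_K}}{32\sqrt3\,C_K^2}\big(A_1^2/\lambda_{1,K}+A_2^2/\lambda_{2,K}\big)$, which agrees with the paper's own proof and with the second displayed form in the theorem; the factor $1/24$ in the first displayed form of the statement is therefore a typo, as the two displayed forms would otherwise be mutually inconsistent.
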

\begin{proof}
From the definition for $Q_{1,K}$ and the relation between $\ell_i$ and $\hat{\ell}_i$, we have
\begin{equation*}
Q_{1,K}=\frac{|K|}{180C_K^2}{\Big[}{\Big(}\sum_{i=1}^{3}|\hat{\ell}_i|^2{\Big)}^2
+\sum_{i=1}^{3}|\hat{\ell}_{i}|^4{\Big]}
=\frac{L^4|K|}{15C_K^2}.
\end{equation*}
Similarly, using (\ref{shape2d}), the following reduction is straight-forward:
\begin{align}\label{Q2total}
&Q_{2,K}=\frac{1}{48|K|}\sum_{\substack{i,j=1,2\\i\leq
j}}D_{ij}{\bf \ell}_i^t{\bf
\ell}_j=
\frac{1}{48|K|}{\Big(}|{\bf \ell}_{1}|^2(d_{12}^2+d_{23}^2)+|{\bf \ell}_{2}|^2(
d_{12}^2+d_{13}^2)+2({\bf \ell}_{1}\cdot {\bf \ell}_{2})d_{12}^2{\Big)}\nonumber\\
&=\frac{1}{48C_K^2|K|}{\Big(}|{\bf \ell}_{1}|^2{\big(}({\bf \hat{\ell}}_{1}\cdot {\bf
\hat{\ell}}_{2})^2+({\bf \hat{\ell}}_{2}\cdot {\bf \hat{\ell}}_{3})^2{\big
)}+|{\bf \ell}_{2}|^2{\big(}({\bf \hat{\ell}}_{1}\cdot {\bf \hat{\ell}}_{2})^2+({\bf
\hat{\ell}}_{1}\cdot {\bf
\hat{\ell}}_{3})^2{\big )}+2({\bf \ell}_{1}\cdot {\bf \ell}_{2})({\bf \hat{\ell}}_{1}\cdot {\bf \hat{\ell}}_{2})^2{\Big)}\nonumber\\
&=\frac{L^4}{48C_K^2|K|}{\Big(}|{\bf \ell}_{1}|^2+|{\bf \ell}_{2}|^2+({\bf \ell}_{1}\cdot {\bf \ell}_{2}){\Big)}\cdot 2\cos^2\Big(\frac{2\pi}{3}\Big)=\frac{L^4}{96C_K^2|K|}\sum_{\substack{i,j=1,2\\i\leq
j}}{\bf \ell}_i^t{\bf
\ell}_j\nonumber\\
&=\frac{L^4\det(H_K)^{\frac{1}{2}}}{96C_K|\hat{K}|}\sum_{\substack{i,j=1,2\\i\leq
j}}{\Big(}C_K^{-\frac{1}{2}}R^{-1}\Lambda^{-\frac{1}{2}}\hat{\ell}_i{\Big)}\cdot
{\Big(}C_K^{-\frac{1}{2}}R^{-1}\Lambda^{-\frac{1}{2}}\hat{\ell}_j{\Big)}\quad\quad\text{by $|K|=\frac{|\hat{K}|}{C_K\sqrt{\det(H_K)}}$}\nonumber\\
&=\frac{L^4\det(H_K)^{\frac{1}{2}}}{96C_K^2|\hat{K}|}\sum_{\substack{i,j=1,2\\i\leq
j}}{\Big(}\Lambda^{-\frac{1}{2}}\hat{\ell}_i{\Big)}\cdot{\Big(}\Lambda^{-\frac{1}{2}}\hat{\ell}_j{\Big)},
\end{align}
Since $\hat{\ell}_1=L(\cos\theta,\sin\theta)^t$ and $\hat{\ell}_2=-L{\Big(}\cos{\big(}\frac{\pi}{3}+\theta{\big )},\sin{\big(}\frac{\pi}{3}+\theta{\big)}{\Big)}^t$, after simple calculation,
\begin{equation*}
\Lambda^{-\frac{1}{2}}\hat{\ell}_1=L{\Big(}\lambda_{1,K}^{-\frac{1}{2}}\cos\theta,\lambda_{2,K}^{-\frac{1}{2}}\sin\theta{\Big)}^t,
\: \Lambda^{-\frac{1}{2}}\hat{\ell}_2=-L{\Big(}\lambda_{1,K}^{-\frac{1}{2}}\cos\big(\frac{\pi}{3}+\theta\big),
\lambda_{2,K}^{-\frac{1}{2}}\sin\big(\frac{\pi}{3}+\theta\big){\Big)}^t,
\end{equation*}
and then the following three equalities hold:
\begin{align*}
{\Big(}\Lambda^{-\frac{1}{2}}\hat{\ell}_1{\Big)}\cdot{\Big(}\Lambda^{-\frac{1}{2}}\hat{\ell}_1{\Big)}
&=L^2{\Big(}\lambda_{1,K}^{-1}\cos^2\theta+\lambda_{2,K}^{-1}\sin^2\theta{\Big)},\\
{\Big(}\Lambda^{-\frac{1}{2}}\hat{\ell}_2{\Big)}\cdot{\Big(}\Lambda^{-\frac{1}{2}}\hat{\ell}_2{\Big)}
&=L^2{\Big(}\lambda_{1,K}^{-1}\cos^2{\big(}\frac{\pi}{3}+\theta{\big)}+\lambda_{2,K}^{-1}\sin^2\big(\frac{\pi}{3}+\theta\big){\Big)},\\
{\Big(}\Lambda^{-\frac{1}{2}}\hat{\ell}_1{\Big)}\cdot{\Big(}\Lambda^{-\frac{1}{2}}\hat{\ell}_2{\Big)}
&=-L^2{\Big(}\lambda_{1,K}^{-1}\cos\theta\cos\big(\frac{\pi}{3}+\theta\big)+
\lambda_{2,K}^{-1}\sin\theta\sin\big(\frac{\pi}{3}+\theta\big){\Big)}.
\end{align*}
Thus, we get
\begin{equation*}
\sum_{\substack{i,j=1,2\\i\leq
j}}{\Big(}\Lambda^{-\frac{1}{2}}\hat{\ell}_i{\Big)}\cdot{\Big(}\Lambda^{-\frac{1}{2}}\hat{\ell}_j{\Big)}
=\frac{3}{4}{\Big(}\lambda_{1,K}^{-1}+
\lambda_{2,K}^{-1}{\Big)},
\end{equation*}
which gives to
\begin{equation}\label{Q2part}
\frac{L^4\det(H_K)^{\frac{1}{2}}}{96C_K^2|\hat{K}|}\sum_{\substack{i,j=1,2\\i\leq
j}}{\Big(}\Lambda^{-\frac{1}{2}}\hat{\ell}_i{\Big)}\cdot{\Big(}\Lambda^{-\frac{1}{2}}\hat{\ell}_j{\Big)}
=\frac{L^6\det(H_K)^{\frac{1}{2}}}{128C_K^2|\hat{K}|}\frac{\mbox{tr}(H_K)}{\det(H_K)}
=\frac{L^4\mbox{tr}(H_K)}{32\sqrt{3}C_K^2\det(H_K)^{\frac{1}{2}}}.
\end{equation}
Insert (\ref{Q2part}) into (\ref{Q2total}) and then the formula for $Q_{2,K}$ is proved.

\noindent Similar calculation can produce the corresponding formula for $Q_{3,K}$:
\begin{align*}
Q_{3,K}&=\frac{1}{16|K|^3}{\Big(}{|\bf
\ell}_{2}|^2d_{11}-2{\bf \ell}_{1}^t{\bf \ell}_{2}d_{12}+{|\bf
\ell}_{1}|^2d_{22}{\Big)}^2=\frac{L^4}{16C_K^2|K|^3}{\Big(}\sum_{\substack{i,j=1,2\\i\leq
j}}{\bf \ell}_i^t{\bf \ell}_j{\Big)}^2\\
&=\frac{L^4}{16C_K^4|K|^3}\cdot\frac{9}{16}{\Big(}\lambda_{1,K}^{-1}+
\lambda_{2,K}^{-1}{\Big)}^2=\frac{9L^8}{256C_K^4|K|^3}\frac{\mbox{tr}(H_K)^2}{\det(H_K)^2}
=\frac{3L^4\mbox{tr}(H_K)^2}{16C_K^2|K|\det(H_K)}.
\end{align*}
To analyze the term $\widetilde{Q}_{2,K}$, more patience should be paid. First, similar to that of $Q_{2,K}$,
\begin{equation}\label{widetildeQ2}
\widetilde{Q}_{2,K}=\frac{L^4}{96C_K^2|K|}\sum_{\substack{i,j=1,2\\i\leq j}}k_ik_j.
\end{equation}
Second, using $\hat{\ell}_1=L(\cos\theta,\sin\theta)^t$, we have
\begin{equation*}
k_1=(b_{2},-b_{1}){\bf \ell}_{1}=C_K^{-\frac{1}{2}}(b_{2},-b_{1})R^{-1}\Lambda^{-\frac{1}{2}}\hat{\ell}_1
=C_K^{-\frac{1}{2}}L{\Big(}A_{1}\lambda_{1,K}^{-\frac{1}{2}}\cos\theta
+A_{2}\lambda_{2,K}^{-\frac{1}{2}}\sin\theta{\Big)}.
\end{equation*}
Similarly, we have
\begin{equation*}
k_2=-C_K^{-\frac{1}{2}}L{\Big(}A_{1}\lambda_{1,K}^{-\frac{1}{2}}\cos\big(\frac{\pi}{3}+\theta\big)
+A_{2}\lambda_{2,K}^{-\frac{1}{2}}\sin\big(\frac{\pi}{3}+\theta\big){\Big)}.
\end{equation*}
Thus,
\begin{equation}\label{sumkikj}
\sum_{\substack{i,j=1,2\\i\leq
j}}k_ik_j=\frac{3}{4}C_K^{-1}L^2{\Big(}A_{1}^2\lambda_{1,K}^{-1}
+A_{2}^2\lambda_{2,K}^{-1}{\Big)}.
\end{equation}
Finally, insert (\ref{sumkikj}) into (\ref{widetildeQ2}), it is easily got that
\begin{align*}
\widetilde{Q}_{2,K}&=\frac{L^6}{128C_K^3|K|}{\Big(}\frac{A_1^2}{\lambda_{1,K}}+\frac{A_2^2}{\lambda_{2,K}}{\Big)}
=\frac{L^6\sqrt{\det(H_K)}}{128C_K^2|\hat{K}|}{\Big(}\frac{A_1^2}{\lambda_{1,K}}+\frac{A_2^2}{\lambda_{2,K}}{\Big)}
\\&=\frac{L^4\sqrt{\det(H_K)}}{32\sqrt{3}C_K^2}{\Big(}\frac{A_1^2}{\lambda_{1,K}}+\frac{A_2^2}{\lambda_{2,K}}{\Big)}.
=\frac{L^4}{32\sqrt{3}C_K^2}\cdot\frac{{\bf b}^tH_K{\bf b}}{\sqrt{\det(H_K)}}
\end{align*}
Now, we have obtained all the desired results in Theorem \ref{theorem:Q_iK} and the proof is complete.
\end{proof}

\noindent
To summarize,
\begin{equation*}
E_K=\frac{L^4}{C_K^2}{\Bigg(}\frac{|K|}{15\alpha_K} + \frac{\varepsilon\mbox{tr}(H_K)}{32\sqrt{3}\sqrt{\det(H_K)}} +
\frac{\alpha_K}{32\sqrt{3}}
\frac{{\bf b}^tH_K{\bf b}}{\sqrt{\det(H_K)}}
+\frac{3\varepsilon^2\alpha_K\mbox{tr}(H_K)^2}{16|K|\det(H_K)}{\Bigg)}
=:\frac{L^4}{C_K^2}P(\alpha_K).
\end{equation*}
To minimize the term
\begin{equation*}
P(\alpha_K)=\frac{|K|}{15\alpha_K}+\frac{\alpha_K}{32\sqrt{3}}{\Bigg(}
\frac{{\bf b}^tH_K{\bf b}}{\sqrt{\det(H_K)}}
+\frac{6\sqrt{3}\varepsilon^2\mbox{tr}(H_K)^2}{|K|\det(H_K)}{\Bigg)} + \frac{\varepsilon\mbox{tr}(H_K)}{32\sqrt{3}\sqrt{\det(H_K)}},
\end{equation*}
the following condition should be satisfied
\begin{equation*}
\frac{|K|}{15\alpha_K}=\frac{\alpha_K}{32\sqrt{3}}{\Bigg(}
\frac{{\bf b}^tH_K{\bf b}}{\sqrt{\det(H_K)}}
+\frac{6\sqrt{3}\varepsilon^2\mbox{tr}(H_K)^2}{|K|\det(H_K)}{\Bigg)}.
\end{equation*}
This equation gives the following choice for the optimal stabilized parameters
\begin{equation}\label{2dstab_param}
\alpha_K^*=\sqrt{\frac{32\sqrt{3}}{15}}\cdot|K|\cdot{\Bigg(}\frac{|K|{\bf b}^tH_K{\bf b}}{\sqrt{\det(H_K)}}
+\frac{6\sqrt{3}\varepsilon^2\mbox{tr}(H_K)^2}{\det(H_K)}{\Bigg)}^{-\frac{1}{2}}.
\end{equation}
At this time,
{\begin{align*}
P(\alpha_K^*)&=\sqrt{\frac{1}{120\sqrt{3}}{\Bigg(}
\frac{|K|{\bf b}^tH_K{\bf b}}{\sqrt{\det(H_K)}}
+\frac{6\sqrt{3}\varepsilon^2\mbox{tr}(H_K)^2}{\det(H_K)}{\Bigg)}}
+\frac{\varepsilon\mbox{tr}(H_K)}{32\sqrt{3}\sqrt{\det(H_K)}}\\
&\approx\sqrt{\frac{1}{120\sqrt{3}}{\Bigg(}
\frac{|K|{\bf b}^tH_K{\bf b}}{\sqrt{\det(H_K)}}
+\frac{6\sqrt{3}\varepsilon^2\mbox{tr}(H_K)^2}{\det(H_K)}{\Bigg)}}=\frac{2|K|}{15\alpha_K^*}.
\end{align*}
Here we omit the small term $\frac{\varepsilon\mbox{tr}(H_K)}{32\sqrt{3}\sqrt{\det(H_K)}}$ to simplify the formula of the metric tensor. Of course if we do not omit this term the derivation can be carried out similarly, however, in this case the expression of the metric tensor will seem rather complicated. In fact the numerical efficiency is almost the same no matter this term is omitted or not (we have done numerical experiments to verify this point, however, due to the length reason we do not list the comparison in this paper). To proceed the derivation, at this time
\begin{equation*}
E_K=\frac{L^4}{C_K^2}P(\alpha_K^*)\approx\frac{2L^4|K|}{15\alpha_K^*C_K^2}.
\end{equation*}
To satisfy the equidistribution requirement, we require that
\begin{equation*}
\frac{L^4}{C_K^2}P(\alpha_K^*)\approx\frac{2L^4|K|}{15\alpha_KC_K^2}=\frac{e}{N},
\end{equation*}
where $N$ is the number of elements of $\mathcal{T}_h$. Then $C_K$ could be the form
\begin{equation}\label{relation_C_K_Alpha}
C_K\sim \sqrt{\frac{|K|}{\alpha_K^*}},
\end{equation}
and $M({\bf
x})$ could be the form
\begin{equation}\label{2dstab_monitor}
M({\bf x})=\sqrt[{4}]{\frac{|K|{\bf b}^tH_K{\bf b}}{\sqrt{\det(H_K)}}
+\frac{6\sqrt{3}\varepsilon^2\mbox{tr}(H_K)^2}{\det(H_K)}}H(u).
\end{equation}
To establish the metric tensor $\mathcal{M}({\bf x})$, we just need to combine the formula of the monitor function and the size requirement. Since the procedure is standard, we omit it due to the length reason.

\subsection{Practical use of stabilized parameters}
Since there exists a constant $C$ in error estimate (\ref{2derrorbound}), the exact ratios between the terms $\alpha_K^{-1}\|u-\Pi_hu\|_{L^2(K)}^2$, $\varepsilon\|\nabla (u-\Pi_hu)\|_{L^2(K)}^2$, $\alpha_K\|{\bf b}\cdot\nabla (u-\Pi_hu)\|_{L^2(K)}^2$, and $\alpha_K\varepsilon^2\|\Delta(u-\Pi_hu)\|_{L^2(K)}^2$ can hardly be precisely estimated. So the stabilized parameters $\alpha_K$ (\ref{2dstab_param}) and the monitor function (\ref{2dstab_monitor}) can be regarded just as quasi-optimal. To establish the practical form of the stabilized parameters with the same scale with other stabilized strategies, consider the special case: the true solution is isotropic. Due to (\ref{2dstab_param}), when convection dominates diffusion, that is,
\begin{equation*}
\frac{|K|{\bf b}^tH_K{\bf b}}{\sqrt{\det(H_K)}}\gg\frac{\varepsilon^2\mbox{tr}(H_K)^2}{\det(H_K)},
\end{equation*}
the following estimate holds:
\begin{equation*}
\frac{15}{32\sqrt{3}}\cdot(\alpha_K^{*})^2=\frac{|K|\sqrt{\det(H_K)}}{{\bf b}^tH_K{\bf b}}
=\frac{C_K|K|\sqrt{\det(H_K)}}{{\bf b}^tC_KH_K{\bf b}}=\frac{|\hat{K}|}{{\bf b}^tM_K{\bf b}}
=\frac{|\hat{K}|}{\hat{\bf b}^t\hat{\bf b}}=\frac{|K|}{{\bf b}^t{\bf b}}.
\end{equation*}
And then
\begin{equation*}
\sqrt{\frac{15}{32\sqrt{3}}}\cdot\alpha_K^{*}=\frac{\sqrt{|K|}}{\|{\bf b}\|}=\frac{\sqrt[4]{3}\|{\bf \ell}_{1}\|}{2\|{\bf b}\|}.
\end{equation*}
On the contrary, when diffusion dominates convection,
\begin{equation*}
\frac{|K|{\bf b}^tH_K{\bf b}}{\sqrt{\det(H_K)}}\ll\frac{\varepsilon^2\mbox{tr}(H_K)^2}{\det(H_K)},
\end{equation*}
we have
\begin{equation*}
\sqrt{\frac{15}{32\sqrt{3}}}\cdot\alpha_K^{*}=
\frac{1}{\sqrt{6\sqrt{3}}}\cdot\frac{|K|\sqrt{\det(H_K)}}{\varepsilon\mbox{tr}(H_K)}=
\frac{1}{\sqrt{6\sqrt{3}}}\cdot\frac{\sqrt{3}\|{\bf \ell}_{1}\|^2}{8\varepsilon}.
\end{equation*}
To compare with the other stabilized parameters in the same scale we suggest
{\begin{equation}\label{2dstab_param_prac}
\alpha_K^{*}=|K|\cdot{\Bigg(}\frac{\sqrt{3}\cdot|K|{\bf b}^tH_K{\bf b}}{\sqrt{\det(H_K)}}
+\frac{27\varepsilon^2\mbox{tr}(H_K)^2}{4\det(H_K)}{\Bigg)}^{-\frac{1}{2}},
\end{equation}
and corresponding monitor function $M({\bf
x})$ could be the form
\begin{equation}\label{2dstab_monitor_prac}
M({\bf x})=\sqrt[{4}]{\frac{|K|{\bf b}^tH_K{\bf b}}{\sqrt{\det(H_K)}}
+\frac{9\sqrt{3}\varepsilon^2\mbox{tr}(H_K)^2}{4\det(H_K)}}H(u).
\end{equation}

\begin{figure}[tbhp]
\centering
\subfloat[LEP]{\includegraphics[width=6cm]{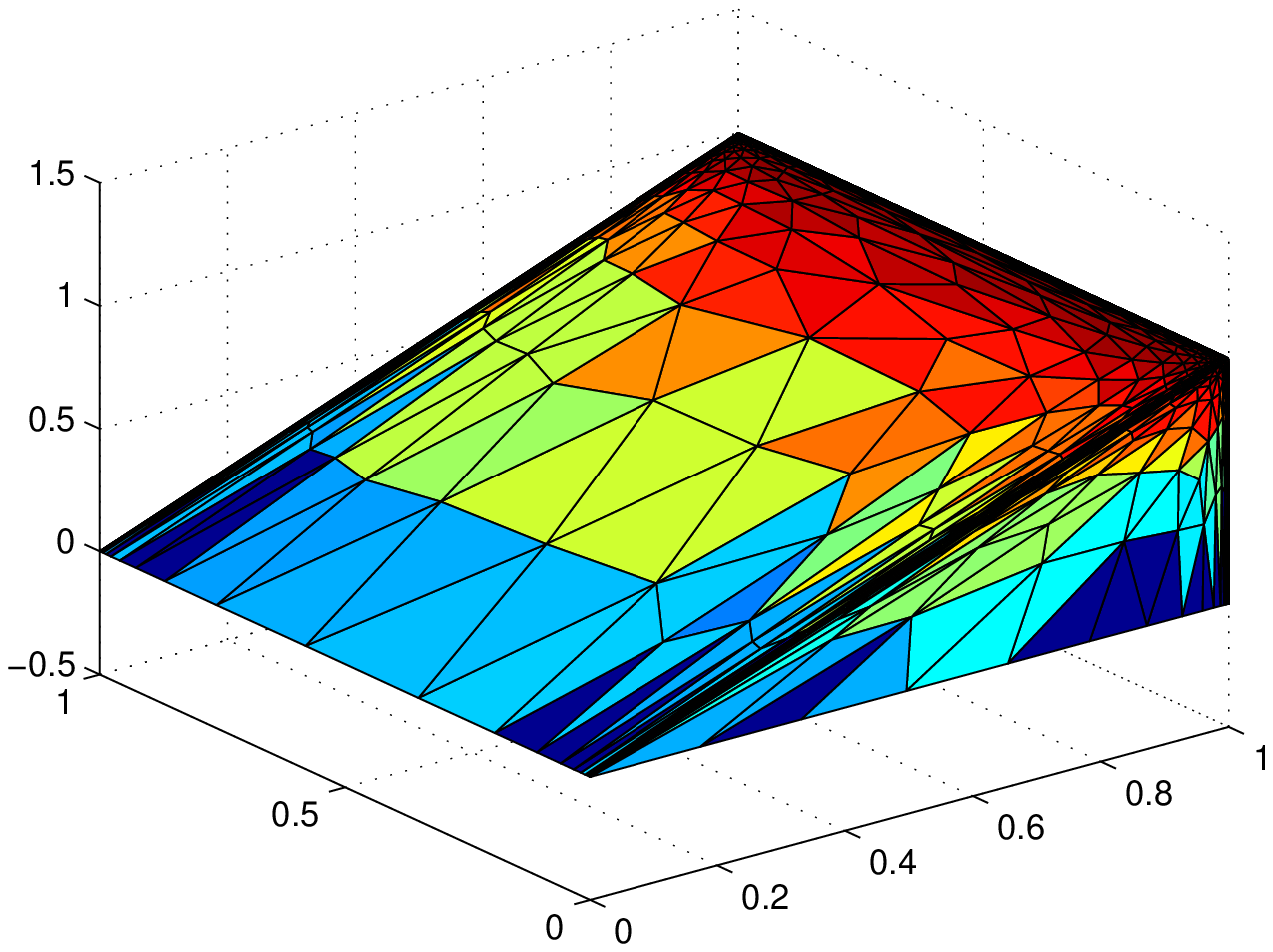}}
\subfloat[PLE]{\includegraphics[width=6cm]{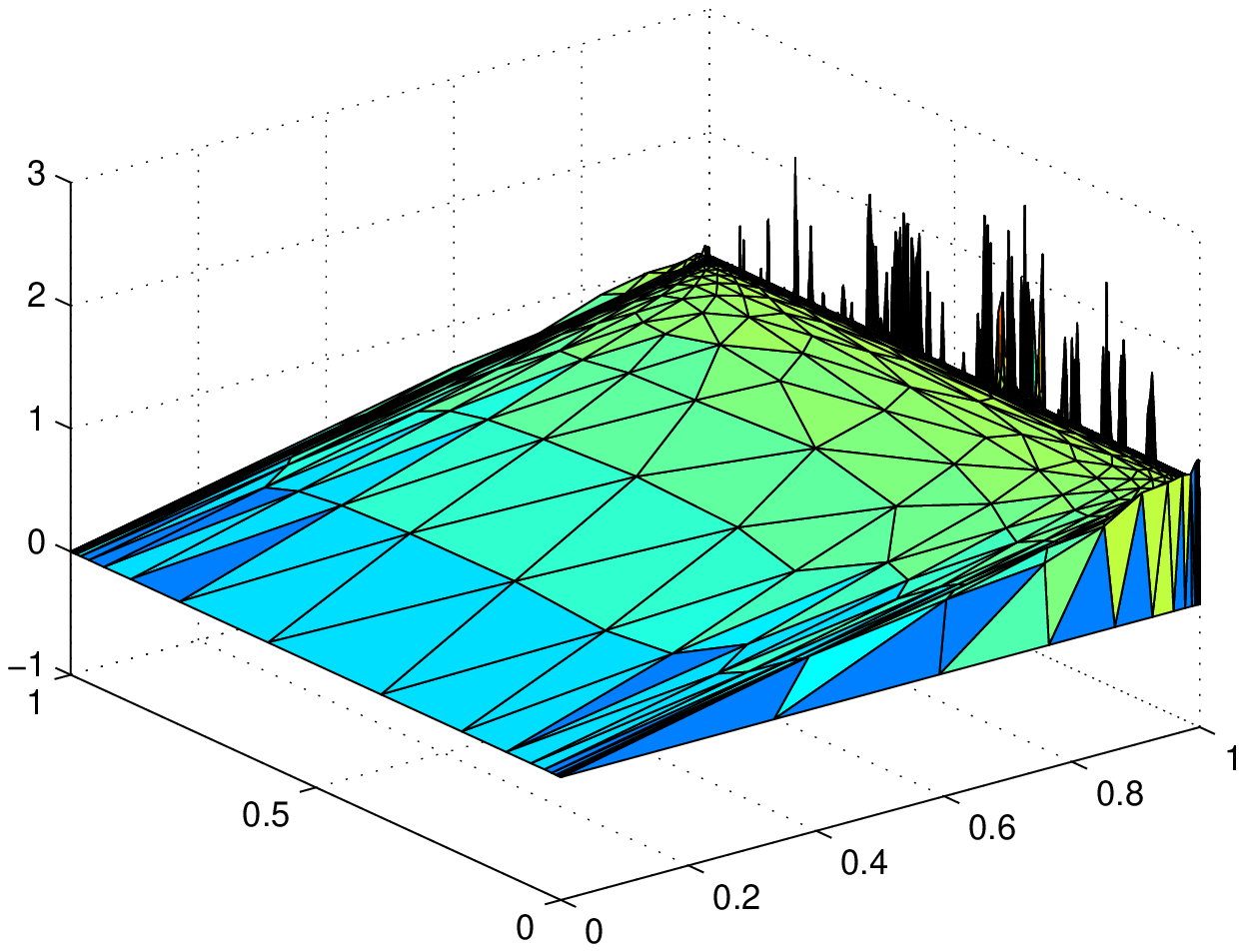}}\\
\subfloat[DDC]{\includegraphics[width=6cm]{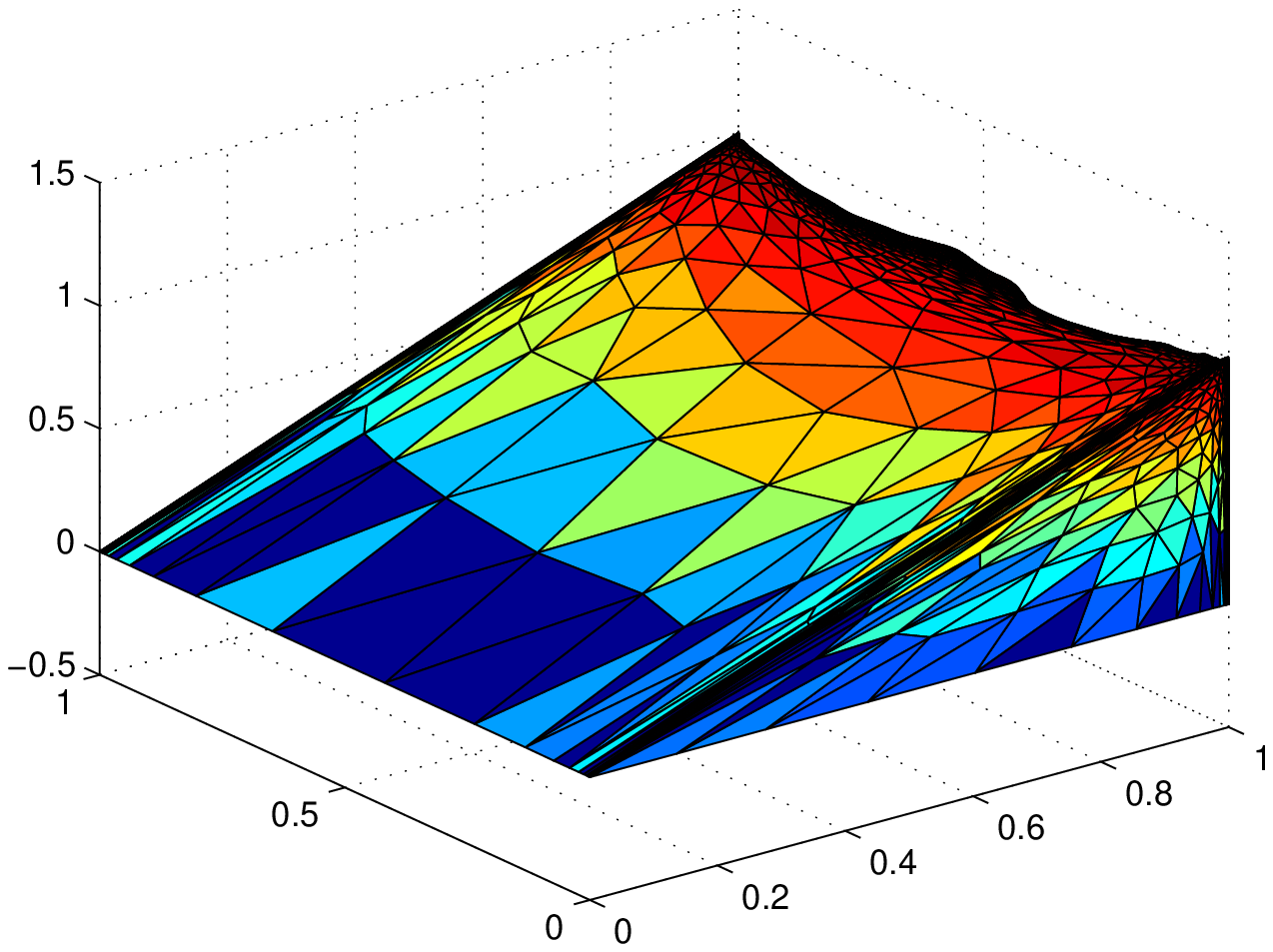}}
\subfloat[NSP]{\includegraphics[width=6cm]{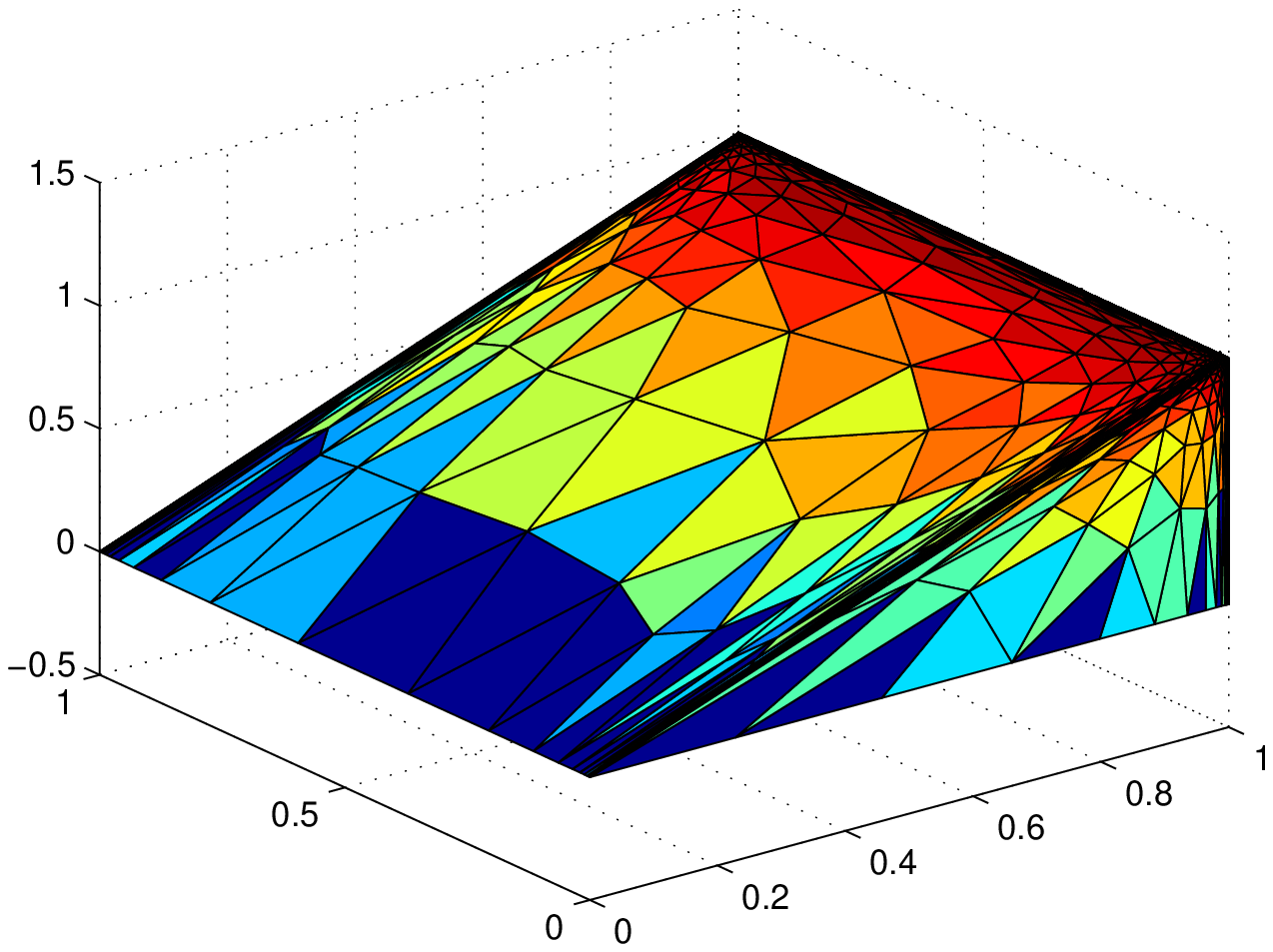}}
\caption{Example \ref{Example_1}: solutions generated by four strategies using monitor function (\ref{2dstab_monitor_prac}).}\label{fig:5-1-1}
\end{figure}
\section{Numerical examples}\label{Section_Numerical_Examples}
In this section, we will demonstrate several numerical examples to see the superiority of our strategy to others. All the presented experiments
are performed using the BAMG project \cite{hecht1998bamg} via software FreeFem++ \cite{hechtfreefem++}. Given a ¡°background¡± mesh, the nodal
values of the solution are obtained
by solving a PDE through the GLS finite element method. Then the metric tensors are obtained by using
some recovery techniques ( In this paper we use the the gradient recovery technique proposed in \cite{zienkiewicz1987simple}). Finally,
a new mesh according to the computed metric tensor is generated by BAMG. The process is repeated several times in the computation until the approximate solution satisfies the prescribed tolerance.

\subsection{Stability vs parameters}
\begin{example}\label{Example_1}
We consider (\ref{adv-diff-problem}) in $\Omega=(0, 1)^2$ with ${\bf b}=(1, 0)^T$, $f=1$ and the homogeneous boundary condition.
The solution for this problem possesses an regular boundary
layer at $x=1$ and parabolic boundary layers at $y=0$ and $y=1$.
This example has been used, e.g., in \cite{john2006computational,mizukami1985petrov}.
\end{example}
Five stabilized strategies using monitor function (\ref{2dstab_monitor_prac}) are compared in terms of stability. Since the stability of stabilized strategies DDC and DEE are similar in this example, here we just list the result for DDC out of the two. It is seen from Figure \ref{fig:5-1-1} that the solution by the stabilized strategy PLE exhibits spurious oscillations. For the solution by the stabilized strategy DDC (and also DEE), although there does not exist obvious oscillation, the regular layer is smeared. While the solutions by the stabilized strategies LEP and NSP have better stability.

\begin{figure}[tbhp]
\centering
\subfloat[DEE]{\includegraphics[width=6cm]{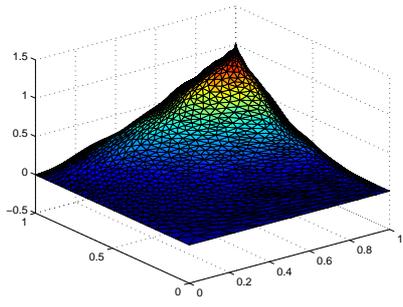}}
\subfloat[LEP]{\includegraphics[width=6cm]{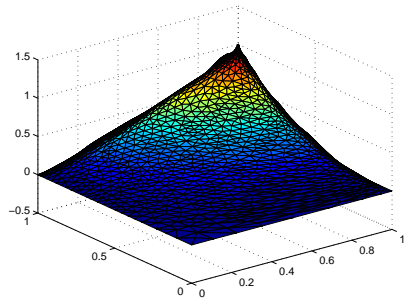}}\\
\subfloat[DDC]{\includegraphics[width=6cm]{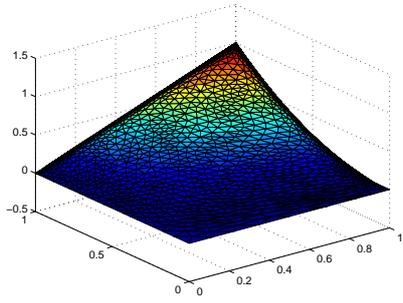}}
\subfloat[NSP]{\includegraphics[width=6cm]{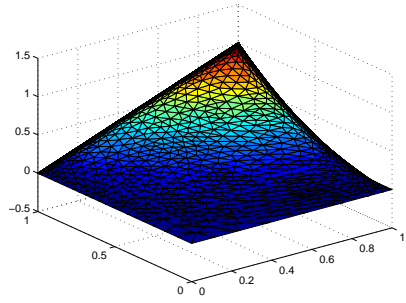}}
\caption{Example \ref{Example_2}: solutions generated by four strategies using monitor function (\ref{2dstab_monitor_prac}).}\label{fig:5-2-1}
\end{figure}

\begin{example}\label{Example_2}
We consider (\ref{adv-diff-problem}) in $\Omega = (0, 1)^2$ with $\varepsilon = 10^{-8}$ and $\mathbf b = (2, 3)^T$. The right-hand side $f$
and the Dirichlet data $g$ are chosen in such a way that
\begin{align*}
u(x,y)=xy^2-y^2\exp\Big(\frac{2(x-1)}{\varepsilon}\Big)-
x\exp\Big(\frac{3(y-1)}{\varepsilon}\Big)+\exp\Big(\frac{2(x-1)+3(y-1)}{\varepsilon}\Big),
\end{align*}
which exhibits regular boundary
layers at $x=1$ and $y=1$. This solution has been used, e.g., in \cite{sun2010numerical} where the convection-diffusion-reaction equations are considered.
\end{example}

Five stabilized strategies using monitor function (\ref{2dstab_monitor_prac}) are compared in terms of stability. Since the stability of stabilized strategies LEP and PLE are similar in this example, here we just list the result for LEP out of two. It is seen from Figure \ref{fig:5-2-1} that the solutions by the stabilized strategies LEP (and also PLE) and DEE smear the regular layer. While the solutions by the stabilized strategies DDC and NSP have better stability.

In this subsection, we compare five stabilized strategies in terms of stability through two examples. NSP have good stability for every example, while other stabilized strategies behave not so good in terms of stability for at least one example.

\subsection{Accuracy vs metric tensors}
Still consider Example \ref{Example_2}, since the exact solution is given, we demonstrate in this subsection that the metric tensor proposed in this paper is more suitable for SFEMs approximating the convection-dominated convection-diffusion equation than those optimal ones for the interpolation error in some norms, e.g. $L^2$ norm (\cite{chen2007optimal}) which is defined in term of monitor function by
\begin{equation}\label{L2_monitor}
M({\bf x})=\frac{1}{\sqrt[6]{\det(H_K)}}H(u).
\end{equation}
Following the analysis of Section \ref{Section_Metric_Tensor}, if the monitor function is set to be (\ref{L2_monitor}), the optimal form of stabilization parameters is also (\ref{2dstab_param_prac}). Five stabilized strategies using monitor function (\ref{L2_monitor}) are compared in terms of stability. The stability for these strategies are similar with that using monitor function (\ref{2dstab_monitor_prac}). In Figure \ref{fig:5-2-2} the discretized errors in $L^2$ norm are compared between five stabilized strategies using monitor functions (\ref{2dstab_monitor_prac}) and (\ref{L2_monitor}).

From Figures \ref{fig:5-2-1} and \ref{fig:5-2-2} we conclude as follows: (1) For every  stabilized strategy, the error in $L^2$ norm using monitor function (\ref{2dstab_monitor_prac}) is smaller than that using (\ref{L2_monitor}). (2) We could divide these stabilized strategies into two types by the convergence of errors in $L^2$ norm. The first type contains the strategies DEE, PLE, and LEP. The second type contains the strategy DDC, and NSP which behaves better than the first one in terms of accuracy no matter which metric tensor is used. (3) For the two stabilized strategies of the second type, NSP behaves better than  DDC when the discretized error in $L^2$ norm is concerned no matter which metric tensor is used.
\begin{figure}[tbhp]
\centering
\subfloat[monitor function (\ref{2dstab_monitor_prac})]{\includegraphics[width=6cm]{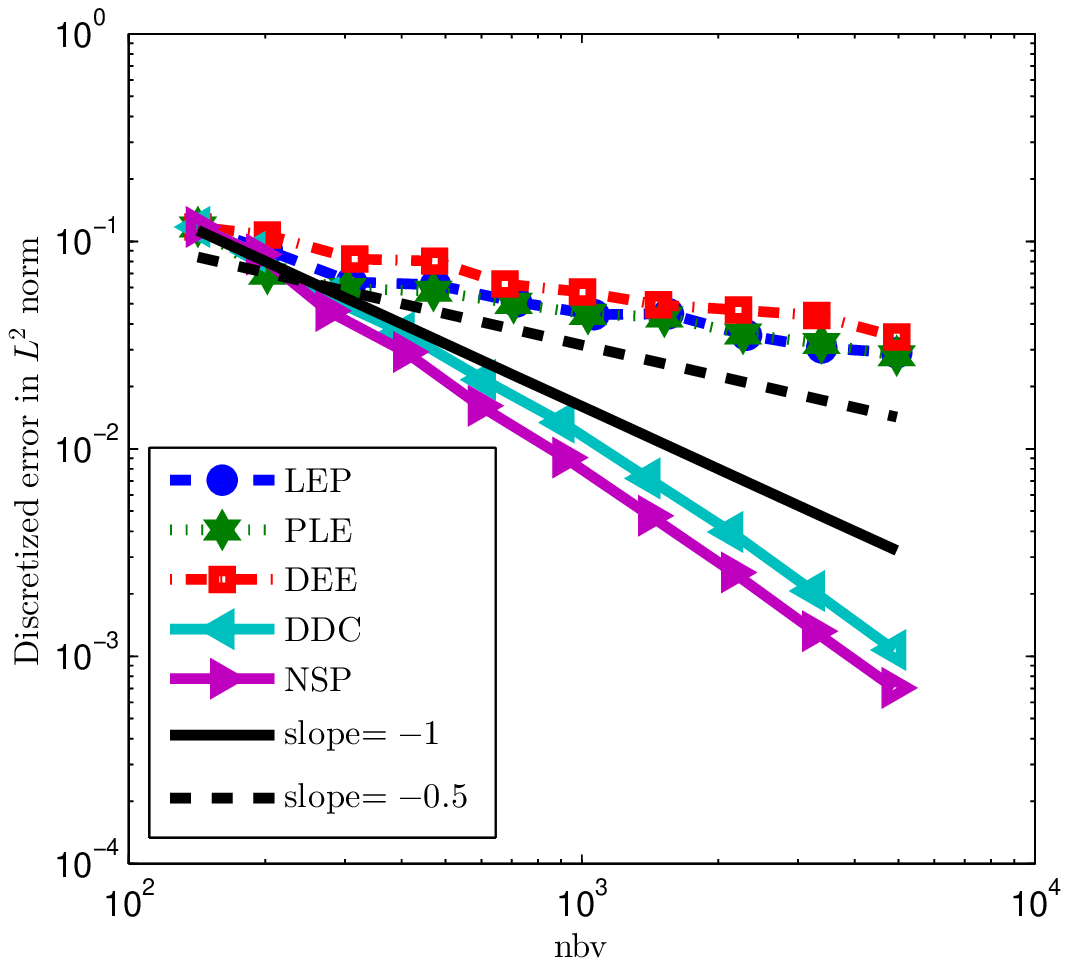}}
\subfloat[monitor function (\ref{L2_monitor})]{\includegraphics[width=6cm]{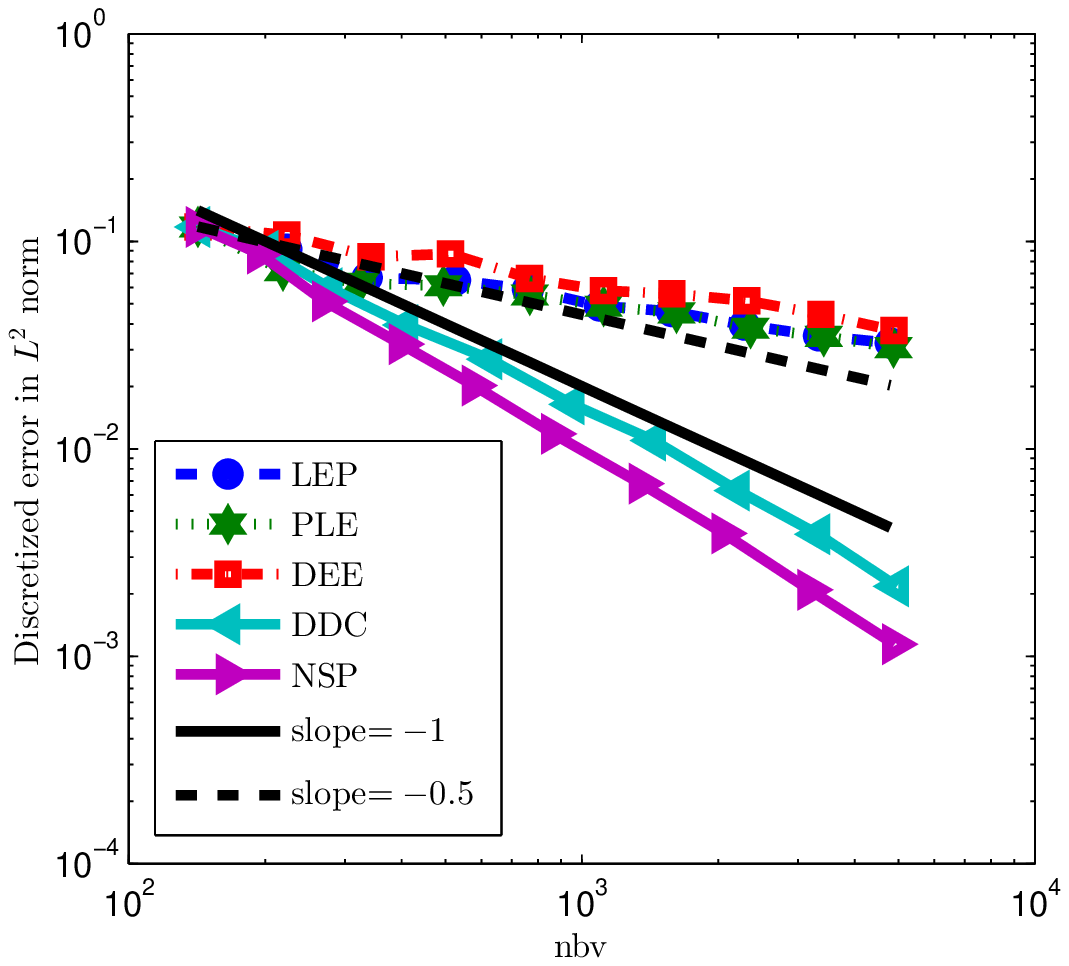}}
\caption{Example \ref{Example_2}: discretized errors in $L^2$ norm using five stabilized strategies.}
\label{fig:5-2-2}
\end{figure}

\subsection{Relationship between our parameter and the best one among others}
In view of the above considerations, besides NSP it seems that strategy DDC is the best choice in term of accuracy. It is then interesting to study the relationship between NSP and DDC, so we give the following analysis.
We set ${\bf b}_0$ as the unit vector in the direction of ${\bf b}$, that is ${\bf b}=|{\bf b}|{\bf b}_0$, then
\begin{equation*}
\frac{|K|{\bf b}^tH_K{\bf b}}{\sqrt{\det(H_K)}}=\frac{|K||{\bf b}|^2(h_K{\bf b}_0)^tM_K(h_K{\bf b}_0)}{h_K^2C_K\sqrt{\det(H_K)}}=\frac{|K|^2|{\bf b}|^2\hat{\bf b}_h^t\hat{\bf b}_h}{h_K^2|\hat{K}|}\in {\Big[}4/\sqrt{3},\sqrt{3}{\Big]}\cdot\frac{|K|^2|{\bf b}|^2}{h_K^2},
\end{equation*}
where $h_K$ is the diameter of $K$ in the direction of the convection ${\bf b}$, $\hat{\bf b}_h$ is the image of $h_K{\bf b}_0$. $\hat{\bf b}_h\in [\sqrt{3}L/2,L]$ holds due to the affine map $\mathcal{F}_K$. Thus, on the triangle where convection dominates diffusion under the standard of NSP,
\begin{equation*}
\alpha_K^*\approx\sqrt[4]{\frac{1}{3}}|K|\cdot{\Bigg(}\frac{|K|{\bf b}^tH_K{\bf b}}{\sqrt{\det(H_K)}}
{\Bigg)}^{-\frac{1}{2}}\in {\Big[}1/2,1/\sqrt{3}{\Big]}\cdot\frac{h_K}{|{\bf b}|},
\end{equation*}
the lower bound is the same as the stabilized strategy DDC, while the ratio between the upper bound and the lower bound is $2/\sqrt{3}$. This result shows that the two stabilized strategies are similar on those triangles where convection dominates diffusion under the standard of NSP. So the main difference between the two strategies comes from those triangles where diffusion dominates convection, indicating that the stabilized strategy on the diffusion-dominated elements plays an important role for SFEMs.
\section{Conclusion}
In this paper, we propose a strategy which generates anisotropic meshes and selects the optimal stabilization parameters for the GLS or SUPG linear finite element methods to solve the two dimensional convection-dominated convection-diffusion equation. This strategy basically solves the two key problems mentioned at the beginning of this paper in a relatively rigorous way, i.e., (1) how to generate optimal anisotropic meshes for minimizing the discretized error of SFEMs for the convection-dominated convection-diffusion equation, and (2) based on (1) how to select optimal stabilization parameters. Numerical examples also indicate that the new strategy proposed in this article is superior than any existed one when both stability and accuracy are concerned.

\end{document}